\renewcommand{\vec}[1]{\mathbf{#1}}
\def\subFV{\scriptscriptstyle{FV}}
\def\subFS{\scriptscriptstyle{FS}}
\def\subVS{\scriptscriptstyle{VS}}
\newcommand{\be}{\begin{equation}}
\newcommand{\ee}{\end{equation}}
\newcommand{\ba}{\begin{array}}
\newcommand{\ea}{\end{array}}
\newcommand{\bea}{\begin{eqnarray}}
\newcommand{\eea}{\end{eqnarray}}
\newcommand{\beas}{\begin{eqnarray*}}
\newcommand{\eeas}{\end{eqnarray*}}
\newtheorem{thm}{Theorem}[section]
\newtheorem{prop}{Proposition}[section]
\begin{document}

\title{An energy-stable parametric finite element method for simulating solid-state dewetting}
\shorttitle{An energy-stable PFEM for solid-state dewetting }

\author{%
{\sc
Quan Zhao}\thanks{Email:quanzhao90@u.nus.edu}
\\[2pt]
Department of Mathematics, National University of
Singapore, Singapore, 119076\\[6pt]
{\sc Wei Jiang}\thanks{Corresponding author. Email: jiangwei1007@whu.edu.cn}\\[2pt]
School of Mathematics and Statistics,
Wuhan University, Wuhan, 430072, China\\[2pt]
Hubei Key Laboratory of Computational Science,
Wuhan University, Wuhan, 430072, China\\[6pt]
{\sc and}\\[6pt]
{\sc Weizhu Bao}\thanks{Email: matbaowz@nus.edu.sg; URL: http://blog.nus.edu.sg/matbwz/}
\\[2pt]
Department of Mathematics, National University of
Singapore, Singapore, 119076
}
\shortauthorlist{Q.~Zhao, W.~Jiang and W.~Bao}

\maketitle

\begin{abstract}
{We propose an energy-stable parametric finite element method (ES-PFEM) for
simulating solid-state dewetting of thin films in two dimensions via a sharp-interface model, which is governed by surface diffusion and contact line (point) migration together with proper boundary conditions. By reformulating the relaxed contact angle condition into a Robin-type boundary condition and then treating it as a natural boundary condition, we obtain a new variational formulation for the problem, in which the interface curve and its contact points are evolved simultaneously. Then, the variational problem is discretized in space by using piecewise linear elements. A full discretization is presented by adopting the backward Euler method in time, and the well-posedness and energy dissipation of the full discretization are
established. The numerical method is semi-implicit (i.e., a linear system to be solved at each time step and thus efficient), unconditionally energy-stable with respect to the time step, and second-order in space measured by a manifold distance between two curves. In addition, it demonstrates equal mesh distribution when the solution reaches its equilibrium, i.e., long-time dynamics. Numerical results are reported to show  accuracy and efficiency as well as some good properties of the proposed numerical method.
}
{Solid-state dewetting, surface diffusion, contact line migration, variational formulation, energy-stable parametric finite element method, manifold distance.}
\end{abstract}

\begin{center}
\textbf{Dedicated to the memory of John W. Barrett}
\end{center}

\section{Introduction}

Driven by capillarity effects, solid thin films deposited on substrate are often metastable or unstable in the as-deposited state, and could exhibit complicated morphological evolution when heated to a critical temperature even below the material's melting point. This phenomenon, called as solid-state dewetting~\citep[see][]{Thompson12}, has been widely observed in various thin film/substrate systems~\citep[see, e.g.,][]{Jiran90, Kim13, Rabkin14, Bollani19}. Recently, solid-state dewetting has demonstrated wide applications in thin film technologies. For example, it may be deleterious during fabricating thin film structures (e.g., microelectronic and optoelectronic devices), because it can destroy the devices' structure and reliability; on the other hand, it may be advantageous and can be positively used to create patterns of nanoscale particles, e.g., used in sensors~\citep[see][]{Armelao06} and as catalysts for carbon~\citep[see][]{Randolph07} and semiconductor nanowire growth~\citep[see][]{Schmidt09}. These important applications have inspired increasing interests in understanding the underlying mechanism of solid-state dewetting phenomena
\citep[see, e.g.,][]{Wong00,Dornel06,Jiang12,Wang15,Jiang16,Backofen18,Jiang19a,Jiang19b}.

\begin{figure}[htp!]
\centering
\includegraphics[width=0.9\textwidth]{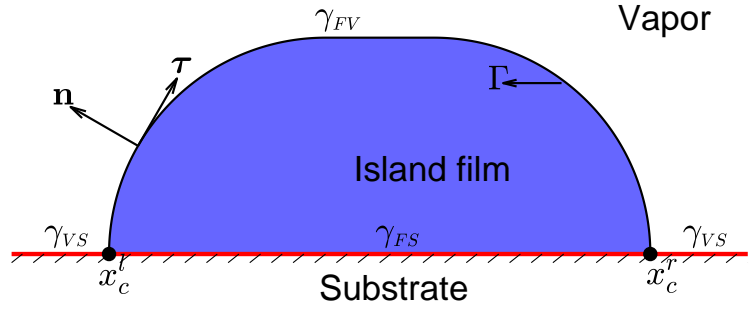}
\label{fig:model2d}
\caption{A schematic illustration of an island film on a rigid, flat substrate (i.e., the $x$-axis) in two dimensions, where $x_c^l$ and $x_c^r$ are the left
and right contact points, $\gamma_{_{\subFV}},~\gamma_{_{\subVS}}$ and $\gamma_{_{\subFS}}$ represent the film/vapor, vapor/substrate and film/substrate surface energy densities, respectively.}
\end{figure}

In general, the solid-state dewetting problem can be regarded as a type of open curves/surfaces evolution problems governed by surface diffusion and contact point/line migration (or moving contact lines) in two dimensions (2D)/three dimensions (3D)~\citep[see, e.g.,][]{Wong00,Jiang12,Wang15,Jiang19c}. As illustrated in Fig.~\ref{fig:model2d}, a contact point in 2D is a triple point where three phases (i.e., solid film, vapor and substrate) meet. As time evolves, the contact point moves along the substrate, and it brings an additional kinetic feature to this problem. Recently, different mathematical models and simulation methods have been proposed for simulating solid-state dewetting, such as sharp-interface models ~\citep[see][]{Wong00,Wang15,Jiang19c}, phase field models~\citep[see][]{Jiang12,Dziwnik15b,Naffouti17,Huang19b},
crystalline formulation method ~\citep[see][]{Carter95,Zucker13}, discrete surface chemical potential method~\citep[see][]{Dornel06} and
kinetic Monte Carlo method~\citep[see][]{Pierre09b}. In this paper, we mainly focus on how to design an efficient and accurate numerical method about solving a sharp-interface model for solid-state dewetting.

As shown in Fig.~\ref{fig:model2d}, the solid-state dewetting problem in 2D is described as the evolution of an open curve $\Gamma(t)=\vec X(s,t)=(x(s, ~t),\;y(s,~t))^T$, which is parameterized by arc length $s\in[0,~L(t)]$
with $L:=L(t)=|\Gamma(t)|$ the total length of the curve at time $t$,
and it intersects with the solid substrate (i.e., the $x$-axis) at the left and right moving contact points, i.e., $x_c^l(t):=x(0,t)$ and $x_c^r(t):=x(L,t)$. By using thermodynamic variation of the total interfacial energy, a dimensionless sharp-interface model for simulating solid-state dewetting of thin films with isotropic surface energy in 2D can be derived as~\citep[see][]
{Wang15, Bao17b}
\begin{subequations}
\label{eqn:2diso}
\begin{numcases}{}
\label{eqn:2diso1}
\partial_{t}\vec{X}=\partial_{ss}\kappa \; \vec{n}, \quad 0<s<L(t),\quad t>0, \\
\label{eqn:2diso2}
\kappa=-\left(\partial_{ss}\vec{X}\right)\cdot\vec{n}, \quad\vec n = (-\partial_sy,~\partial_sx)^T,
\end{numcases}
\end{subequations}
where $\vec n:=\vec n(s,t)$ is the unit outer normal vector of the curve,  and $\kappa:=\kappa(s,~t)$ represents the curvature of the curve.
The initial curve is given as
\begin{equation}
\vec{X}(s,0):=\vec{X}_0(s)=(x_0(s),~y_0(s))^T, \qquad 0\le s\le L_0:=L(0),
\label{eqn:initial}
\end{equation}
satisfying $x_0(0)<x_0(L_0)$.
The boundary conditions are given as
\begin{itemize}
\item [(i)] contact point condition
\begin{equation}
\label{eqn:weakBC1}
y(0,t)=0,\quad y(L,t) = 0,\quad t\geq0;
\end{equation}
\item [(ii)] relaxed contact angle condition
\begin{equation}
\label{eqn:weakBC2}
\frac{d x_c^l(t)}{d t}=\eta (\cos\theta_d^l - \sigma),\qquad
\frac{d x_c^r(t)}{d t}=-\eta (\cos\theta_d^r -\sigma),\qquad t\ge0;
\end{equation}
\item [(iii)]zero-mass flux condition
\begin{equation}
\label{eqn:weakBC3}
\partial_s\kappa(0,t) =0,\qquad\partial_s\kappa(L,t) = 0, \qquad t\geq 0;
\end{equation}
\end{itemize}
where the material constant $\sigma$ is defined as $\sigma: = \frac{\gamma_{_{\subVS}}-\gamma_{_{\subFS}}}{\gamma_{_{\subFV}}}$ with $\gamma_{_{\subFV}}$, $\gamma_{_{\subVS}}$ and $\gamma_{_{\subFS}}$ representing the film/vapor, vapor/substrate and film/substrate surface energy densities, respectively, which determines the equilibrium contact angle  $\theta_i\in(0,\pi)$ (i.e. the well-known isotropic Young's angle)  satisfying $\sigma=\cos\theta_i\in(-1,1)$; $\theta_d^l:=\theta_d^l(t)$ and $\theta_d^r:=\theta_d^r(t)$ are the (dynamic) contact angles at the left and right moving contact points, respectively;
and  $\eta>0$ is the contact line mobility which controls the relaxation rate of the dynamical contact angles $\theta_d^l$ and $\theta_d^r$ to the equilibrium contact angle $\theta_i$. In addition, we adopt $x_c^l(t)=x(s=0,t)$  and $x_c^r(t)=x(s=L(t),t)$ and assume that they satisfy
$x_c^l(t)\le x_c^r(t)$.

In fact, condition (i) implies that the two contact points must move along the flat substrate. When $\eta\to+\infty$, condition (ii) collapses to the well-known Young equation \citep[see][]{Young1805}. Condition (iii) is proposed to ensure that the total area/mass of the film is conserved during the evolution,
i.e., no-mass flux at the moving contact points. In addition, by defining
the total area/mass of the film $A(t)$ (i.e., the enclosed area by the curve $\Gamma(t)$ and the substrate) and the total interfacial free energy of the system $W(t)$  as
\begin{equation}\label{eqn:totalmass}
A(t)=\int_0^{L(t)}y(s,t)\partial_s x(s,t)\;ds,\qquad
W(t)=|\Gamma(t)| + W_{\rm sub} = L(t) - \sigma[x_c^r(t)-x_c^l(t)],\qquad t\ge 0,
\end{equation}
one can prove that \citep[see][]{Wang15, Bao17b}
\begin{equation}
\frac{d}{dt}A(t) = 0,\qquad \frac{d}{dt}W(t)= -\int_0^{L(t)}(\partial_s\kappa)^2\;ds - \frac{1}{\eta}\left[\left(\frac{dx_c^l}{dt}\right)^2+
\left(\frac{dx_c^r}{dt}\right)^2\right]\leq 0, \qquad t\ge0,
\label{eqn:massenergy2d}
\end{equation}
which immediately imply that the sharp-interface model (\ref{eqn:2diso}) with the boundary conditions (\ref{eqn:weakBC1})-(\ref{eqn:weakBC3}) and the initial condition \eqref{eqn:initial} satisfies mass conservation and energy dissipation, i.e.,
\begin{equation}
A(t) \equiv A(0),\qquad W(t)\le W(t^\prime)\le W(0), \qquad t\ge t^\prime\ge0.
\label{eqn:massenergy2dd}
\end{equation}

Different numerical methods have been proposed in the literature for
simulating the evolution of a closed or open curve under surface diffusion
(\ref{eqn:2diso}) including applications in solid-state dewetting. When the curve can be represented by a graph with specific boundary conditions, the finite element methods~\citep[see][]{Coleman96,Deckelnick03,Bansch04,Deckelnick05fully} have been proposed. However, these methods cannot be directly adopted to simulating the evolution of a closed curve or solid-state dewetting problems. On the other hand, an implicit finite difference method was proposed by \citet{Mayer01} for surface diffusion flow in 3D with adaptive triangular mesh. Meanwhile, the ``marker-particle'' method, i.e., an explicit finite difference scheme together with re-meshing at each time step, was proposed for solving the sharp-interface model of solid-state dewetting problems~\citep[see][]{Wong00,Du10,Wang15}. However, this method suffers from a very severe stability restriction and it is not easy to extend to 3D and/or
anisotropic surface energies.

Based on the previous work~\citep[see the recent review paper by][]{Barrett20},
and by reformulating the surface diffusion equation \eqref{eqn:2diso} into
\begin{subequations}
\label{eqn:2disonew}
\begin{numcases}{}
\label{eqn:2disonew1}
\vec{n}\cdot \partial_{t}\vec{X}-\partial_{ss}\kappa=0,  \\
\label{eqn:2disonew2}
\kappa\,\vec{n}+\partial_{ss}\vec{X}=0, \qquad 0<s<L(t),\quad t>0,
\end{numcases}
\end{subequations}
Barrett {\it et al.} introduced a novel variational formulation of \eqref{eqn:2disonew} and presented an elegant parametric finite element method (PFEM) for the evolution of a closed curve under surface diffusion~\citep[see][]{Barrett07, Barrett08JCP,Barrett19}. The PFEM has a few good properties including unconditional stability, energy dissipation and mesh equal distribution when the solution reaches its equilibrium. It has been successfully extended for simulating the evolution of curves with  grain boundary motions, thermal grooving and sintering, and triple junctions \citep[see][]{Barrett07, Barrett10}.
Recently, similar to the ``marker-particle'' method for dealing with the relaxed contact angle condition \eqref{eqn:weakBC2}~\citep[see][]{Wong00,Wang15}, by adopting the forward Euler method to discretize the relaxed contact angle condition \eqref{eqn:weakBC2} to first evolve the two contact points $x_c^l$ and $x_c^r$ and then treating the new positions of the two contact points as a Dirichlet-type boundary condition (or essential boundary condition) in deriving the variational formulation, we successfully extended the PFEM for simulating solid-state dewetting problems in 2D and 3D as well as the evolution of curves with triple junctions~\citep[see][]{Bao17,Jiang19a,Zhao19,Zhao19b}.
Many interesting phenomena of solid-state dewetting were observed
by using the proposed PFEM method~\citep[see][]{Bao17b,Jiang18a,Jiang19b,Zhao19b}. However, the method suffers from a few drawbacks: (i) it separately deals with the motions of the interface curve and its contact points, and does not make full use of the variational structure of the solid-state dewetting problem; (ii) the energy dissipation cannot be proved in the full discretization, and the stability condition depends on the mesh size and the contact line mobility; (iii) the convergence rate in space reduces to
first-order for the evolution of an open curve (i.e., solid-state dewetting problem), instead of second-order for the evolution of a closed curve~\citep[see Tables 1 and 3 in][]{Bao17}.

The main aim of this paper is to propose a new energy-stable parametric finite element method (ES-PFEM) for the solid-state dewetting problem
\eqref{eqn:2diso} with boundary conditions (\ref{eqn:weakBC1})-(\ref{eqn:weakBC3}) and initial condition \eqref{eqn:initial} by taking the following key ideas: (I) to reformulate the relaxed contact angle condition \eqref{eqn:weakBC2} into a Robin-type boundary condition; (II) to obtain a new variational problem
by adopting \eqref{eqn:2disonew} instead of \eqref{eqn:2diso} and treating
\eqref{eqn:weakBC2} as a natural boundary condition instead of an essential
boundary condition; and (III) to discretize the variational problem in space
by continuous piecewise linear elements and in time by (semi-implicit) backward Euler method.
We establish area/mass conservation and energy dissipation of the new
variational problem and its semi-discretization in space. We also prove
energy dissipation of the full discretization. The proposed ES-PFEM
overcomes all drawbacks of the previous PFEM for simulating solid-state dewetting.  Extensive numerical results are reported to demonstrate efficiency and accuracy as well as unconditional energy stability of the proposed ES-PFEM.

The rest of the paper is organized as follows. In section 2, we present a new variational formulation for the sharp-interface model and prove mass conservation and energy dissipation of the variational problem. In section 3, we propose a semi-discretization in space by using the continuous piecewise linear elements and establish its mass conservation, energy dissipation, mesh equidistribution property and long-time behavior. In section 4, we present a full discretization by adopting the backward Euler method in time and show its well-posedness, energy dissipation and long-time behavior. In section 5, extensive numerical results are reported to demonstrate the efficiency and accuracy of the proposed ES-PFEM. Finally, we draw some conclusions in section 6.

\section{A new variational formulation}

In this section, we present a new variational formulation for the sharp-interface model (\ref{eqn:2diso}) with boundary conditions (\ref{eqn:weakBC1})-(\ref{eqn:weakBC3}) and initial condition \eqref{eqn:initial} by adopting (\ref{eqn:2disonew}) and reformulating
\eqref{eqn:weakBC2} into a Robin-type boundary condition.
We also establish mass conservation and energy dissipation
of the new variational problem.

\subsection{The formulation}

As illustrated in Fig.~\ref{fig:contact}, we label the unit tangential vector of the curve as $\boldsymbol{\tau}:=\boldsymbol{\tau}(s,t)=(\partial_s x,~\partial_x y)^T$, and the unit vector along the substrate line as $\vec t_{\rm sub}=(1,~0)^T$.  Noticing
\begin{eqnarray}
\cos \theta_d^l(t) &=& \boldsymbol{\tau}(s,t)\Big|_{s=0}\cdot\vec t_{\rm sub}=\partial_s x(s,t)\Big|_{s=0},\\\cos \theta_d^r(t)& =& \boldsymbol{\tau}(s,t)\Big|_{s=L(t)}\cdot\vec t_{\rm sub}=\partial_s x(s,t)\Big|_{s=L(t)},
\end{eqnarray}
we can reformulate the relaxed contact angle condition \eqref{eqn:weakBC2}
into a Robin-type boundary condition as
\begin{equation}
\partial_s x(s,t) \Big|_{s=0} = \sigma + \frac{1}{\eta}\frac{dx_c^l(t)}{dt},\qquad \partial_s x(s,t) \Big|_{s=L(t)} = \sigma - \frac{1}{\eta}\frac{dx_c^r(t)}{dt}.
\label{eqn:Robin}
\end{equation}
In fact, when $\eta\rightarrow+\infty$, the Robin-type boundary condition \eqref{eqn:Robin} collapses to the conventional Neumann boundary condition (or the well-known isotropic Young equation) as
\begin{equation}
\label{eqn:young}
\partial_sx(s,t)\Big|_{s=0} = \partial_sx(s,t)\Big|_{s=L(t)} = \sigma.
\end{equation}

\begin{figure}[!htbp]
\centering
\includegraphics[width=0.8\textwidth]{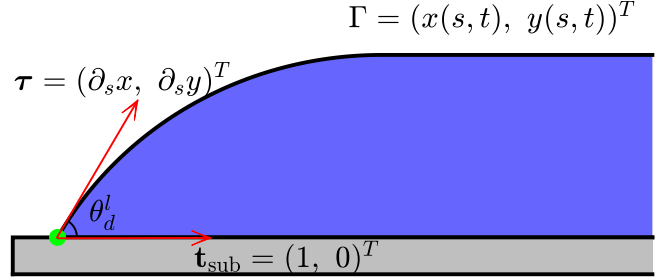}
\caption{A schematic illustration of the interface profile near the left contact point, where $\boldsymbol{\tau}:=\boldsymbol{\tau}(s,~t)=(\partial_s x,\partial_s y)^T$ is the unit tangential vector, and $\vec t_{\rm sub}=(1,~0)^T$ represents the unit vector along the $x$-coordinate (i.e., the substrate line).}
\label{fig:contact}
\end{figure}

In order to obtain a variational formulation of the sharp-interface model, for convenience, we introduce a time independent variable $\rho$ such that $\Gamma(t)$ can be parameterized over the fixed domain $\rho\in \mathbf{I}=[0,1]$ (here $\rho$ and $s$ can be respectively regarded as the Lagrangian and Eulerian variables of the curve $\Gamma(t)$, and
we do not distinguish $\vec X(\rho,t)$ and $\vec X(s,t)$ for representing $\Gamma(t)$ when there is no misunderstanding) as
\begin{equation}
\Gamma(t):=\vec X(\rho,t)=(x(\rho,t),~y(\rho,t))^T:\; \mathbf{I}\times [0, T]\;\rightarrow \;\mathbb{R}^2.
\end{equation}
Based on this parameterization, the arc length $s$ can be given as $s(\rho,t)=\int_0^\rho |\partial_q\vec{X}|\;dq$, and we have $\partial_\rho s=|\partial_\rho\vec{X}|$.
We also introduce the functional space with respect to the
evolution curve $\Gamma(t)$ as
\begin{equation}
L^2(\mathbf{I})=\{u: \mathbf{I}\rightarrow \mathbb{R}, \;\text{and} \int_{\Gamma(t)}|u(s)|^2 ds
=\int_{\mathbf{I}} |u(s(\rho,~t))|^2 \partial_\rho s\, d\rho <+\infty \},
\end{equation}
equipped with the $L^2$ inner product
\be
\big(u,v\big)_{\Gamma(t)}:=\int_{\Gamma(t)}u(s)v(s)\,ds=
\int_{\mathbf{I}}u(s(\rho,t))v(s(\rho,t)) \partial_\rho s\,d\rho,\qquad \forall\;u,v\in L^2(\mathbf{I}),
\ee
for any scalar (or vector) functions. Moreover, define the Sobolev spaces
\begin{subequations}
\begin{align}
H^1(\mathbf{I})&=\{u: \mathbf{I}\rightarrow \mathbb{R}, \;u\in L^2(\mathbf{I}),\text{and} \; \partial_\rho u\in L^2(\mathbf{I})\},\\
H^1_0(\mathbf{I})&=\{u: \mathbf{I}\rightarrow \mathbb{R}, \;u\in H^1(\mathbf{I}),\text{and}\;u(0)=u(1) =0\},
\end{align}
\end{subequations}
and denote $\mathbb{X}:=H^1(\mathbf{I})\times H_0^1(\mathbf{I})$.

Multiplying a test function $\psi\in H^1(\mathbf{I})$ to \eqref{eqn:2disonew1}, integrating over $\Gamma(t)$, integration by parts
and noting the zero-mass flux condition \eqref{eqn:weakBC3}, we obtain
\begin{eqnarray}\label{varf1}
0&=&\Bigl(\vec{n}\cdot \partial_{t}\vec{X}-\partial_{ss}\kappa,\psi\Bigr)_{\Gamma(t)}\nonumber\\
&=&\Bigl(\vec{n}\cdot \partial_{t}\vec{X},\psi\Bigr)_{\Gamma(t)}+\Bigl(\partial_{s}\kappa,
\partial_s\psi\Bigr)_{\Gamma(t)}-\left.\left(\psi\partial_{s}\kappa\right)
\right|_{\rho=0}^{\rho=1}\nonumber\\
&=&\Bigl(\vec{n}\cdot \partial_{t}\vec{X},\psi\Bigr)_{\Gamma(t)}+\Bigl(\partial_{s}\kappa,
\partial_s\psi\Bigr)_{\Gamma(t)}.
\end{eqnarray}
Similarly, using the dot-product with a test function $\boldsymbol{\omega}=(\omega_1,~\omega_2)^T\in \mathbb{X}$
 to \eqref{eqn:2disonew2}, integrating over $\Gamma(t)$, integration by parts
 and noting the relaxed contact angle condition \eqref{eqn:Robin}, we obtain
\begin{eqnarray}\label{varf2}
0&=&\Bigl(\kappa\,\vec{n}+\partial_{ss}\vec{X},\boldsymbol{\omega}
\Bigr)_{\Gamma(t)}\nonumber\\
&=&\Bigl(\kappa,~\vec n\cdot\boldsymbol{\omega}\Bigr)_{\Gamma(t)} -\Bigl(\partial_s\vec X,~\partial_s\boldsymbol{\omega}\Bigr)_{\Gamma(t)} + (\partial_s\vec X\cdot\boldsymbol{\omega})\Big|_{\rho=0}^{\rho=1}\nonumber\\
&=&\Bigl(\kappa,~\vec n\cdot\boldsymbol{\omega}\Bigr)_{\Gamma(t)}-\Bigl(\partial_s\vec X,~\partial_s\boldsymbol{\omega}\Bigr)_{\Gamma(t)} +(\partial_sx\,\omega_1)\Big|_{\rho=0}^{\rho=1}\\
&=&\Bigl(\kappa,~\vec n\cdot\boldsymbol{\omega}\Bigr)_{\Gamma(t)}-\Bigl(\partial_s\vec X,~\partial_s\boldsymbol{\omega}\Bigr)_{\Gamma(t)} - \frac{1}{\eta}\Bigr[\frac{dx_c^l(t)}{dt}\,\omega_1(0) + \frac{dx_c^r(t)}{dt}\omega_1(1)\Bigr] + \sigma\left[\omega_1(1)-\omega_1(0)\right].  \nonumber
\end{eqnarray}

Combining \eqref{varf1} and \eqref{varf2}, we obtain
a new variational formulation for the sharp-interface model (\ref{eqn:2diso}) with boundary conditions (\ref{eqn:weakBC1})-(\ref{eqn:weakBC3}) and initial condition \eqref{eqn:initial} as follows: Given an initial open curve $\Gamma(0)=\vec X(\rho,0)\in\mathbb{X}$ with $\vec X(\rho,0)=\vec X_0(L_0\rho)=\vec X_0(s)$ and set $x_c^l(0)=x_0(s=0)<x_c^r(0)=x_0(s=L_0)$, for $t>0$, find its evolution curves $\Gamma(t):=\vec X(\cdot,t)=(x(\cdot,t),~y(\cdot,t))^T\in \mathbb{X}$, and the curvature $\kappa(\cdot,~t)\in H^1(\mathbf{I})$ such that
\begin{subequations}
\label{eqn:2dweak}
\begin{align}
\label{eqn:2dweak1}
&\Bigl(\vec n\cdot\partial_t\vec X,~\psi\Bigr)_{\Gamma(t)} + \Bigl(\partial_s\kappa,~\partial_s\psi\Bigr)_{\Gamma(t)}=0,\qquad\forall \psi\in H^1(\mathbf{I}),\\[0.5em]
\label{eqn:2dweak2}
&\Bigl(\kappa,\vec n\cdot\boldsymbol{\omega}\Bigr)_{\Gamma(t)}-\Bigl(\partial_s\vec X,~\partial_s\boldsymbol{\omega}\Bigr)_{\Gamma(t)} - \frac{1}{\eta}\Bigr[\frac{dx_c^l(t)}{dt}\,\omega_1(0) + \frac{dx_c^r(t)}{dt}\omega_1(1)\Bigr]\nonumber\\
&\qquad\qquad\qquad\qquad + \;\sigma\,\Bigl[\omega_1(1) - \omega_1(0)\Bigr] = 0,\quad\forall\boldsymbol{\omega}=(\omega_1,~\omega_2)^T\in\mathbb{X};
\end{align}
\end{subequations}
where we adopt $x_c^l(t)=x(\rho=0,t)$ and $x_c^r(t)=x(\rho=1,t)$ and
assume that they satisfy $x_c^l(t)\le x_c^r(t)$.

\subsection{Area/mass conservation and energy dissipation}

For the variational problem \eqref{eqn:2dweak}, we have
\begin{prop}[mass conservation and energy dissipation]
Let $\Bigl(\vec X(\cdot,~t),~\kappa(\cdot,~t)\Bigr)$ be a solution of the variational problem \eqref{eqn:2dweak}. Then, the total area/mass of the film is conserved during the evolution, i.e.,
\begin{equation}\label{massvp}
A(t)\equiv A(0) = \int_0^{L_0}y_0(s)\partial_sx_0(s)\;ds,\qquad t\geq 0,
\end{equation}
and the total free energy of the system is decreasing during the evolution, i.e,
\begin{equation}\label{energydvp}
W(t)\leq W(t^\prime)\leq W(0) = L_0 - \sigma(x_c^r(0) - x_c^l(0)),\qquad t\geq t^\prime\geq 0.
\end{equation}
\end{prop}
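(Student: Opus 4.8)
The plan is to prove the two assertions by differentiating $A(t)$ and $W(t)$ in time and then inserting judiciously chosen test functions into the variational system \eqref{eqn:2dweak}. Throughout I would work in the fixed Lagrangian parameter $\rho\in\mathbf{I}$, which is time-independent, so that $\partial_t$ and $\partial_\rho$ commute; this is the device that keeps the computations clean and lets me identify time derivatives of geometric quantities with the bilinear forms appearing in \eqref{eqn:2dweak}.

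For mass conservation, I would first rewrite $A(t)=\int_0^{L(t)}y\,\partial_s x\,ds=\int_0^1 y\,\partial_\rho x\,d\rho$ via the change of variables $ds=\partial_\rho s\,d\rho$. Differentiating under the integral sign and integrating by parts in $\rho$ on the term $\int_0^1 y\,\partial_\rho(\partial_t x)\,d\rho$, the boundary contribution $[y\,\partial_t x]_0^1$ vanishes because the contact point condition \eqref{eqn:weakBC1} forces $y(0,t)=y(1,t)=0$. What remains is
\be
\frac{d}{dt}A(t)=\int_0^1\bigl(\partial_t y\,\partial_\rho x-\partial_\rho y\,\partial_t x\bigr)\,d\rho=\Bigl(\vec n\cdot\partial_t\vec X,~1\Bigr)_{\Gamma(t)},
\ee
where the last equality uses $\vec n=(-\partial_s y,\partial_s x)^T$ and $\partial_\rho s\,d\rho=ds$. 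Choosing the constant test function $\psi\equiv 1$ in \eqref{eqn:2dweak1} annihilates the $(\partial_s\kappa,\partial_s\psi)_{\Gamma(t)}$ term and gives $(\vec n\cdot\partial_t\vec X,1)_{\Gamma(t)}=0$, whence $\frac{d}{dt}A(t)=0$ and \eqref{massvp} follows.

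For energy dissipation, the main step is to compute $\frac{d}{dt}L(t)$ and to recognise it inside \eqref{eqn:2dweak2}. Writing $L(t)=\int_0^1|\partial_\rho\vec X|\,d\rho$, differentiating, and using $\partial_s\vec X=\partial_\rho\vec X/|\partial_\rho\vec X|$ together with the commutation of $\partial_t$ and $\partial_\rho$, I obtain $\frac{d}{dt}L(t)=(\partial_s\vec X,~\partial_s\partial_t\vec X)_{\Gamma(t)}$. The key observation is that $\partial_t\vec X$ is an admissible test function, i.e. $\partial_t\vec X\in\mathbb X$: its second component $\partial_t y$ lies in $H^1_0(\mathbf{I})$ because differentiating the contact point condition \eqref{eqn:weakBC1} in $t$ yields $\partial_t y(0,t)=\partial_t y(1,t)=0$. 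I would then take $\psi=\kappa$ in \eqref{eqn:2dweak1} to get $(\kappa,\vec n\cdot\partial_t\vec X)_{\Gamma(t)}=-\norm{\partial_s\kappa}_{\Gamma(t)}^2$, and take $\boldsymbol\omega=\partial_t\vec X$ in \eqref{eqn:2dweak2}, noting that its boundary traces satisfy $\omega_1(0)=\frac{dx_c^l}{dt}$ and $\omega_1(1)=\frac{dx_c^r}{dt}$. Substituting the first relation into the second and invoking the $L(t)$ identity, all the curvature and $\sigma$-terms reorganise into
\be
\frac{d}{dt}W(t)=\frac{d}{dt}L(t)-\sigma\Bigl[\tfrac{dx_c^r}{dt}-\tfrac{dx_c^l}{dt}\Bigr]=-\norm{\partial_s\kappa}_{\Gamma(t)}^2-\frac{1}{\eta}\Bigl[\bigl(\tfrac{dx_c^l}{dt}\bigr)^2+\bigl(\tfrac{dx_c^r}{dt}\bigr)^2\Bigr]\le 0,
\ee
which exactly reproduces the continuous energy law \eqref{eqn:massenergy2d}. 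Integrating this inequality in time over $[t',t]$ and then over $[0,t']$ yields the chain \eqref{energydvp}.

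The two changes of variables and the integration by parts are routine; the step requiring care is the admissibility of $\boldsymbol\omega=\partial_t\vec X$ and the correct identification of its boundary traces $\omega_1(0),\omega_1(1)$ with $\frac{dx_c^l}{dt},\frac{dx_c^r}{dt}$, since this is precisely what converts the Robin and $\sigma$-boundary terms in \eqref{eqn:2dweak2} into the dissipative contact-point contribution. I expect the only genuine obstacle to be bookkeeping: ensuring that the boundary terms generated by the two integrations by parts cancel consistently against the $\sigma[\omega_1(1)-\omega_1(0)]$ term and the mobility term, so that the resulting right-hand side is manifestly nonpositive.
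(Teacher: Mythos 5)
Your proposal is correct and follows essentially the same route as the paper's proof: rewrite $A(t)$ and $L(t)$ over the fixed parameter domain, differentiate in time, and test \eqref{eqn:2dweak1} with $\psi=1$ and $\psi=\kappa$ and \eqref{eqn:2dweak2} with $\boldsymbol{\omega}=\partial_t\vec X$, identifying $\omega_1(0)=\frac{dx_c^l}{dt}$ and $\omega_1(1)=\frac{dx_c^r}{dt}$. Your explicit check that $\partial_t\vec X\in\mathbb{X}$ (via differentiating the contact point condition \eqref{eqn:weakBC1} in time) is a small point the paper leaves implicit, but it does not change the argument.
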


\begin{proof}
Differentiating the left equation in \eqref{eqn:totalmass} with respect to  $t$, integrating by parts and noting \eqref{eqn:weakBC1}, we have
\begin{eqnarray}\label{changAt}
\frac{d}{dt}A(t)&=&\frac{d}{dt}\int_0^{L(t)}y( s,t)\partial_s x(s,t)\;ds\nonumber\\
& = &\frac{d}{dt}\int_0^1 y(\rho,~t)\partial_\rho x(\rho,~t)\;d\rho = \int_0^1 (\partial_t y \partial_\rho x + y \partial_t\partial_\rho x)\;d\rho \nonumber\\
&=& \int_0^1 (\partial_t y \partial_\rho x - \partial_\rho y\partial_t x)d\rho +(y\partial_t x)\Big|_{\rho = 0}^{\rho = 1} = \int_{\Gamma(t)}\partial_t\vec X\cdot\vec n\;ds,\qquad t\ge0.
\end{eqnarray}
Taking the test function $\psi = 1$ in \eqref{eqn:2dweak1} and then plugging it  into \eqref{changAt}, we obtain
\begin{equation}
\frac{d}{dt}A(t):=\int_{\Gamma(t)}\partial_t\vec X\cdot\vec n\;ds  = -\int_{\Gamma(t)}\partial_s\kappa\partial_s\psi ds = 0,\qquad t\ge0,
\end{equation}
which immediately implies the mass conservation \eqref{massvp}.

Similarly, differentiating the right equation in \eqref{eqn:totalmass}
 with respect to  $t$, we get
\begin{eqnarray}\label{changwt}
\frac{d}{dt}W(t) &=& \frac{d}{dt} L(t) - \sigma\Bigl[\frac{dx_c^r(t)}{dt} - \frac{dx_c^l(t)}{dt}\Bigr] \nonumber\\
&=& \frac{d}{dt}\int_0^1|\partial_\rho\vec X|\;d\rho - \sigma \Bigl[\frac{dx_c^r(t)}{dt} - \frac{dx_c^l(t)}{dt}\Bigr]\nonumber\\
&=& \int_0^1 \frac{\partial_\rho\vec X\cdot(\partial_\rho\partial_t\vec X)}{|\partial_\rho\vec X|}\;d\rho - \sigma \Bigl[\frac{dx_c^r(t)}{dt} - \frac{dx_c^l(t)}{dt}\Bigr]\nonumber\\
& = &\int_{\Gamma(t)}\partial_s\vec X\cdot(\partial_s\partial_t\vec X)\;ds  - \sigma \Bigl[\frac{dx_c^r(t)}{dt} - \frac{dx_c^l(t)}{dt}\Bigr],\qquad t\ge0.
\end{eqnarray}
Choosing the test functions $\psi = \kappa$ and $\boldsymbol{\omega} = \partial_t\vec X$ in \eqref{eqn:2dweak1} and \eqref{eqn:2dweak2}, respectively, and then inserting them into \eqref{changwt}, and noticing that $\omega_1(0)=\frac{dx_c^l(t)}{dt}$ and $\omega_1(1)=\frac{dx_c^r(t)}{dt}$, we obtain
\begin{equation}
\frac{d}{dt}W(t):=-\Bigl(\partial_s\kappa,~\partial_s\kappa\Bigr)_{\Gamma(t)} -  \frac{1}{\eta}\Bigl[\Bigl(\frac{dx_c^l(t)}{dt}\Bigr)^2+
\Bigl(\frac{dx_c^r(t)}{dt}\Bigr)^2\Bigr]\leq 0,\qquad t\ge0,
\end{equation}
which immediately implies the energy dissipation \eqref{energydvp}.
\end{proof}

\section{A semi-discretization in space}

In this section, we present a semi-discretization
of the variational formulation \eqref{eqn:2dweak} in space by using the continuous piecewise linear elements and show area/mass conservation
and energy dissipation as well as long-time behavior of the semi-discretization.

\subsection{The discretization}

Let $N>0$ be a positive integer, and denote the mesh size $h=1/N$,
grid points as $\rho_{j}=jh$ for $j=0,1,\ldots,N$, and subintervals
$I_j=[\rho_{j-1},\rho_{j}]$ for $j=1,2,\ldots,N$. Then
a uniform partition of the interval $\mathbf{I}$ is given as
$\mathbf{I}=[0,1]=\bigcup_{j=1}^{N}I_j$.
 Define the finite element subspaces as
\begin{equation*}
\begin{split}
&\mathbb{K}^h:=\{u\in C(\mathbf{I}):\;u\mid_{I_{j}}\in \mathcal{P}_1,\quad \forall \, j=1,2,\ldots,N\}\subseteq H^1(\mathbf{I}),\\
&\mathbb{X}^h := \mathbb{K}^h \times\mathbb{K}_0^h \qquad
\hbox{with} \qquad \mathbb{K}_0^h = \mathbb{K}^h\cap H_0^1(\mathbf{I}),
\end{split}
\end{equation*}
where $\mathcal{P}_1$ denotes the space of the polynomials with degree at most $1$.

Let $\Gamma^h(t)=\vec{X}^h(\cdot,t)\in \mathbb{X}^h$ be the numerical approximation of the solution
$\Gamma(t)=\vec{X}(\cdot,t)$ of the variational problem
\eqref{eqn:2dweak}. Then $\{\Gamma^h(t)\}_{t\ge0}$ are polygonal curves consisting of ordered line segments, and we  always assume that they satisfy
\begin{equation}\label{hjt987}
\min_{1\le j\le N} |\vec h_j(t)|>0, \qquad \hbox{with}
\quad \vec h_j(t)=\vec X^h(\rho_j,t)-\vec X^h(\rho_{j-1},t), \qquad j=1,2,\ldots, N, \qquad  t\geq 0,
\end{equation}
where $|\vec h_j(t)|$ denotes the length of the vector $\vec h_j(t)$ for $j=1,2,\ldots,N$. We note that the unit tangential vector $\boldsymbol{\tau}^h$ and normal vector $\vec n^h$ of the curve $\Gamma^h(t)$ are constant vectors on each interval $I_j$ with possible discontinuities or jumps at nodes $\rho_j$, and they can be easily computed as
\begin{equation}
\boldsymbol{\tau}^h|_{I_j}=\frac{\vec h_j}{|\vec h_j|}:=\boldsymbol{\tau}^h_j,\qquad
\vec{n}^h|_{I_j}
=-({\boldsymbol{\tau}}^h_j)^{\perp}=-\frac{(\vec h_j)^\perp}{|\vec h_j|}:=\vec{n}^h_j,\qquad 1\leq j\leq N,
\label{eqn:semitannorm}
\end{equation}
where $(\cdot)^\perp$ denotes the clockwise rotation by $90$ degrees.

For two piecewise continuous scalar or vector functions $u$ and $v$ defined on the interval $I$ with possible jumps at the nodes $\{\rho_j\}_{j=1}^{N-1}$,  we can define the mass lumped inner product $\big(\cdot,\cdot\big)_{\Gamma^h(t)}^h$ over $\Gamma^h(t)$ (which is an approximation of $\big(\cdot,\cdot\big)_{\Gamma^h(t)}$ by adopting the composite trapezoidal quadrature) as
\begin{equation}
\big(u,~v\big)_{\Gamma^h(t)}^h:=\frac{1}{2}\sum_{j=1}^{N}\Big|\vec h_j\Big|\Big[\big(u\cdot v\big)(\rho_j^-)+\big(u\cdot v\big)(\rho_{j-1}^+)\Big],
\label{eqn:semiproduct}
\end{equation}
where $u(\rho_j^\pm)=\lim\limits_{\rho\to \rho_j^\pm} u(\rho)$.

Take $\Gamma^h(0)=\vec{X}^h(\cdot,~0)\in \mathbb{X}^h $
such that $\vec{X}^h(\rho=\rho_j,~0)=\vec{X}_0(s=s_j^0)$ with $s_j^0=jL_0/N=L_0\rho_j$ for $j=0,1,\ldots,N$. Then a semi-discretization of the variational formulation \eqref{eqn:2dweak} by continuous piecewise linear elements can be stated as follows: Given the initial curve $\Gamma^h(0)=\vec X^h(\cdot,0)$ and set $x_c^l(0)=x_0(s=0)<x_c^r(0)=x_0(s=L_0)$, for $t>0$, find the evolution curves $\Gamma^h(t)=\vec X^h(\cdot,~t)=(x^h(\cdot,t),y^h(\cdot,t))^T\in \mathbb{X}^h$, and the curvature $\kappa^h(\cdot,~t)\in \mathbb{K}^h$ such that
\begin{subequations}
\label{eqn:2dsemi}
\begin{align}
\label{eqn:2dsemi1}
&\Bigl(\vec n^h\cdot\partial_t\vec X^h,~\psi^h\Bigr)_{\Gamma^h(t)}^h + \Bigl(\partial_s\kappa^h,~\partial_s\psi^h\Bigr)_{\Gamma^h(t)}=0,\qquad\forall \psi^h\in \mathbb{K}^h,\\[0.5em]
\label{eqn:2dsemi2}
&\Bigl(\kappa^h,\vec n^h\cdot\boldsymbol{\omega}^h\Bigr)_{\Gamma^h(t)}^h-\Bigl(\partial_s\vec X^h,~\partial_s\boldsymbol{\omega}^h\Bigr)_{\Gamma^h(t)} - \frac{1}{\eta}\Bigr[\frac{dx_c^l(t)}{dt}\,\omega_1^h(0) + \frac{dx_c^r(t)}{dt}\omega_1^h(1)\Bigr] \nonumber \\
&\qquad\qquad\qquad+ \sigma\,\Bigl[\omega_1^h(1) - \omega_1^h(0)\Bigr] = 0,\quad\forall\boldsymbol{\omega}^h=(\omega_1^h,~\omega_2^h)^T
\in\mathbb{X}^h;
\end{align}
\end{subequations}
where again we adopt $x_c^l(t)=x^h(\rho_0=0,t)$  and $x_c^r(t)=x^h(\rho_N=1,t)$ and assume that they satisfy $x_c^l(t)\le x_c^r(t)$.

We remark here that when $\eta=+\infty$, Eq.~\eqref{eqn:2dsemi2} will reduce to the scheme for the boundary condition with the isotropic Young's angle \eqref{eqn:young}. Similar schemes in the isotropic case for a single open curve evolution can be found in the literature~\citep[see][]{Barrett07b, Barrett10}.

\subsection{Area/mass conservation and energy dissipation}

For simplicity, denote $\vec X^h(\rho_j,~t)= (x_j(t),~y_j(t))^T$ for $j=0,1,\ldots,N$, then the total area/mass $A^h(t)$ and free energy $W^h(t)$ of the discrete polygonal curve $\vec X^h(t)$ can be written as
\begin{equation}
A^h(t):=\frac{1}{2}\sum_{j=1}^N\big(x_j-x_{j-1}\big)\big(y_j+y_{j-1}\big),\quad W^h(t):= \sum_{j=1}^N|\vec h_j(t)| - \sigma \big[x_c^r(t) - x_c^l(t)\big],\quad  t\ge0.
\label{eqn:semimassenergy}
\end{equation}
Similar to the previous works (e.g., see Theorem 88 in~\citet{Barrett20}), we can prove the area/mass conservation and energy dissipation properties for the semi-discretization~\eqref{eqn:2dsemi}.
\begin{prop}
\label{prop:semimassenergy}
Let $\Bigl(\vec X^h(\cdot,~t),~\kappa^h(\cdot,~t)\Bigr)$ be the solution of the semi-discretization \eqref{eqn:2dsemi}, then the total area/mass of the film is conserved, i.e.,
\begin{equation}\label{massconsd}
A^h(t)\equiv A^h(0)=\frac{1}{2}\sum_{j=1}^N\big[x_0(s_j)-x_0(s_{j-1})
\big]\big[y_0(s_j)+y_0(s_{j-1})\big]>0,\quad t\ge0,
\end{equation}
and the total free energy of the system is decreasing during the evolution, i.e.,
\begin{equation}\label{energdsd}
W^h(t)\le W^h(t^\prime)\le W^h(0)=\sum_{j=1}^N|\vec h_j(0)|-\sigma(x_c^r(0)-x_c^l(0)),\quad t\ge t^\prime\ge 0.
\end{equation}
\end{prop}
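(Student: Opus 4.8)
The plan is to mirror the continuous argument used in the proof of Proposition~2.1, replacing each integration-by-parts step there by a discrete (summation-by-parts) counterpart and exploiting the piecewise-linear structure of $\vec X^h$ together with the explicit formulas \eqref{eqn:semitannorm} for $\boldsymbol\tau^h$ and $\vec n^h$ and the mass-lumped pairing \eqref{eqn:semiproduct}.

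For the mass conservation \eqref{massconsd}, I would differentiate the discrete area $A^h(t)$ in \eqref{eqn:semimassenergy} directly, obtaining $\frac{d}{dt}A^h=\frac12\sum_{j=1}^N\big[(\dot x_j-\dot x_{j-1})(y_j+y_{j-1})+(x_j-x_{j-1})(\dot y_j+\dot y_{j-1})\big]$. The crux is a discrete summation by parts: adding $(y_j-y_{j-1})(\dot x_j+\dot x_{j-1})$ to $(\dot x_j-\dot x_{j-1})(y_j+y_{j-1})$ produces $2(\dot x_jy_j-\dot x_{j-1}y_{j-1})$, which telescopes to the boundary term $2(\dot x_Ny_N-\dot x_0y_0)$; this vanishes since $y^h\in\mathbb K_0^h$ forces $y_0(t)=y_N(t)=0$, the discrete analogue of the contact point condition \eqref{eqn:weakBC1}. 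Combining this with the explicit normal in \eqref{eqn:semitannorm} and the definition \eqref{eqn:semiproduct}, I would identify $\frac{d}{dt}A^h(t)=\big(\vec n^h\cdot\partial_t\vec X^h,\,1\big)_{\Gamma^h(t)}^h$. Taking the admissible test function $\psi^h\equiv1\in\mathbb K^h$ in \eqref{eqn:2dsemi1} annihilates the exact $\partial_s\kappa^h$ term (since $\partial_s\psi^h=0$), so this pairing is zero and $\frac{d}{dt}A^h\equiv0$. The stated positivity of $A^h(0)$ follows from $x_0(0)<x_0(L_0)$ and the geometry of the initial island profile.

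For the energy dissipation \eqref{energdsd}, the key observation is that, since $\vec X^h$ is piecewise linear over the fixed partition in $\rho$, on each $I_j$ both $\partial_s\vec X^h=\vec h_j/|\vec h_j|$ and $\partial_s\partial_t\vec X^h=\partial_t\vec h_j/|\vec h_j|$ are constant; integrating over $I_j$ (of arclength $|\vec h_j|$) yields $\frac{d}{dt}\sum_{j}|\vec h_j|=\big(\partial_s\vec X^h,\partial_s\partial_t\vec X^h\big)_{\Gamma^h(t)}$. Differentiating $W^h$ then gives $\frac{d}{dt}W^h=\big(\partial_s\vec X^h,\partial_s\partial_t\vec X^h\big)_{\Gamma^h}-\sigma\big[\frac{dx_c^r}{dt}-\frac{dx_c^l}{dt}\big]$. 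I would next take $\psi^h=\kappa^h$ in \eqref{eqn:2dsemi1} and the admissible test function $\boldsymbol\omega^h=\partial_t\vec X^h$ in \eqref{eqn:2dsemi2} --- admissible because $\partial_ty^h(0)=\partial_ty^h(1)=0$ --- and use $\omega_1^h(0)=\frac{dx_c^l}{dt}$, $\omega_1^h(1)=\frac{dx_c^r}{dt}$. Solving \eqref{eqn:2dsemi2} for $\big(\partial_s\vec X^h,\partial_s\partial_t\vec X^h\big)_{\Gamma^h}$ and substituting, the two $\sigma$-boundary contributions cancel exactly, leaving $\frac{d}{dt}W^h=\big(\kappa^h,\vec n^h\cdot\partial_t\vec X^h\big)^h-\frac1\eta\big[(\frac{dx_c^l}{dt})^2+(\frac{dx_c^r}{dt})^2\big]$. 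Finally, symmetry of the mass-lumped product and \eqref{eqn:2dsemi1} with $\psi^h=\kappa^h$ replace the first term by $-\big(\partial_s\kappa^h,\partial_s\kappa^h\big)_{\Gamma^h}\le0$, giving $\frac{d}{dt}W^h\le0$ and hence \eqref{energdsd}.

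I expect the main obstacle to be the mass-conservation step, namely verifying that the pointwise time-derivative of the discrete polygonal area coincides with the mass-lumped pairing $\big(\vec n^h\cdot\partial_t\vec X^h,1\big)^h_{\Gamma^h}$: this demands careful discrete summation-by-parts bookkeeping, the cancellation of the cross terms, and it is precisely here that $y_0=y_N=0$ is indispensable. By contrast, the energy step is a direct computation once the piecewise-linear identity for $\partial_s\vec X^h$ is established; its only delicate points are confirming that $\partial_t\vec X^h\in\mathbb X^h$ and that the $\sigma$-terms cancel.
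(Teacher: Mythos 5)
Your proposal is correct and follows essentially the same route as the paper's proof: both reduce $\tfrac{d}{dt}A^h$ to the mass-lumped pairing $\bigl(\partial_t\vec X^h,\vec n^h\bigr)^h_{\Gamma^h(t)}$ via a discrete summation by parts that uses $y_0=y_N=0$ (the paper telescopes $A^h$ into $\tfrac12\sum(x_jy_{j-1}-x_{j-1}y_j)$ before differentiating, while you differentiate first and then telescope, which is the same algebra), and both obtain energy dissipation by taking $\psi^h=\kappa^h$, $\boldsymbol\omega^h=\partial_t\vec X^h$ after the piecewise-linear identity $\tfrac{d}{dt}\sum_j|\vec h_j|=\bigl(\partial_s\vec X^h,\partial_s\partial_t\vec X^h\bigr)_{\Gamma^h(t)}$. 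No gaps.
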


\begin{proof}
Noting $y_0=y_N = 0$, we can reformulate the left equation in \eqref{eqn:semimassenergy} as
\begin{eqnarray}\label{Ahtcon}
A^h(t)&=&\frac{1}{2}\sum_{j=1}^N\big(x_j-x_{j-1}\big)\big(y_j+y_{j-1}\big)
\nonumber\\
& =& \frac{1}{2}\sum_{j=1}^N\big(x_jy_j - x_{j-1}y_{j-1}\big) + \frac{1}{2}\sum_{j=1}^N\big(x_jy_{j-1} - x_{j-1}y_j\big)\nonumber\\
&=&\frac{1}{2}\sum_{j=1}^{N-1}x_jy_j - \frac{1}{2}\sum_{j=2}^Nx_{j-1}y_{j-1} + \frac{1}{2}\sum_{j=1}^N\big(x_jy_{j-1} - x_{j-1}y_j\big) \nonumber\\
&=&  \frac{1}{2}\sum_{j=1}^N\big(x_jy_{j-1} - x_{j-1}y_j\big).
\end{eqnarray}
Differentiating \eqref{Ahtcon} with respect to $t$, we have
\begin{eqnarray}\label{Ahtcond}
\frac{d}{dt}A^h(t)&=&\frac{1}{2}\sum_{j=1}^N\Bigl(\dot{x}_jy_{j-1} - \dot{x}_{j-1}y_j\Bigr) + \sum_{j=1}^N\Bigl(x_j\dot{y}_{j-1} - x_{j-1}\dot{y}_j\Bigr)\nonumber\\
&=&\frac{1}{2}\sum_{j=1}^N\Bigl(\dot{x}_jy_{j-1} - \dot{x}_jy_j + \dot{x}_{j-1}y_{j-1} - \dot{x}_{j-1}y_j\Bigr) \nonumber\\ &&\;+\; \frac{1}{2}\sum_{j=1}^N\Bigl(x_j\dot{y}_j - x_{j-1}\dot{y}_{j-1} + x_j\dot{y}_{j-1} - x_{j-1}\dot{y}_j\Bigr)\nonumber\\
&=& -\frac{1}{2}\sum_{j=1}^N\big(\dot{x}_j + \dot{x}_{j-1}\big)\big(y_{j-1} - y_j\big) + \sum_{j=1}^N\big(\dot{y}_j + \dot{y}_{j-1}\big)\big(x_j - x_{j-1}\big),
\end{eqnarray}
where, for simplicity, here we denote the time derivative of $f$ as $\dot{f}$. Combining \eqref{eqn:semitannorm}, \eqref{eqn:semiproduct} and
\eqref{Ahtcond}, we obtain
\begin{equation}\label{Ahtcondd}
\frac{d}{dt}A^h(t) = \frac{1}{2}\sum_{j=1}^N|\vec h_j(t)|\Bigl[\frac{d}{dt}\vec X_j^h + \frac{d}{dt}\vec X_{j-1}^h\Bigr]\cdot \vec n^h_j(t) = \Bigl(\partial_t\vec X^h,~\vec n^h\Bigr)_{\Gamma^h(t)}^h.
\end{equation}
Choosing the test function $\psi^h=1$ in \eqref{eqn:2dsemi1} and then
plugging it into \eqref{Ahtcondd}, we have
\begin{equation}
\frac{d}{dt}A^h(t)=-\big(\partial_s\kappa^h,~\partial_s 1\big)_{\Gamma^h(t)}=0,
\qquad t\ge0,
\label{eqn:semimass}
\end{equation}
which immediately implies the area/mass conservation \eqref{massconsd}.

Similarly, differentiating the right equation in \eqref{eqn:semimassenergy}
with respect to $t$, we obtain
\begin{eqnarray}\label{Whtsdh}
\frac{d}{dt}W^h(t) &=& \sum_{j=1}^N \frac{1}{|\vec h_j|}\bigl(\frac{d\vec h_j}{dt}\cdot\vec h_j\bigr) -\sigma \Big[\frac{dx_c^r(t)}{dt} - \frac{dx_c^l(t)}{dt}\Big]\nonumber\\
&=&\sum_{j=1}^N\Bigl[\frac{\vec X_j^h - \vec X_{j-1}^h}{|\vec X_j^h - \vec X_{j-1}^h|}\cdot\frac{\partial_t\vec X_j^h - \partial_t\vec X_{j-1}^h}{|\vec X_j^h - \vec X_{j-1}^h|}|\vec X_j^h - \vec X_{j-1}^h|\Bigr]-\sigma \Big[\frac{dx_c^r(t)}{dt} - \frac{dx_c^l(t)}{dt}\Big] \nonumber\\
&=& \Bigl(\partial_s\vec X^h,~\partial_s(\partial_t\vec X^h)\Bigr)_{\Gamma^h(t)} -\sigma \Big[\frac{dx_c^r(t)}{dt} - \frac{dx_c^l(t)}{dt}\Big].
\end{eqnarray}
Choosing the test functions $\psi^h = \kappa^h$ and $\boldsymbol{\omega}^h = \partial_t\vec X^h$ in \eqref{eqn:2dsemi1} and \eqref{eqn:2dsemi2}, respectively,  and then plugging them into \eqref{Whtsdh}, we obtain
\begin{equation}
\label{eqn:dsemienergy}
\frac{d}{dt}W^h(t) = -\Bigl(\partial_s\kappa^h,~\partial_s\kappa^h\Bigr)_{\Gamma^h(t)} - \frac{1}{\eta}\left[\left(\frac{dx_c^r(t)}{dt}\right)^2 + \left(\frac{dx_c^l(t)}{dt}\right)^2\right]\leq 0, \qquad t\geq 0,
\end{equation}
which immediately implies the energy dissipation \eqref{energdsd}.
\end{proof}

\subsection{Equal mesh distribution}
Define the mesh ratio indicator (MRI) $\Psi(t)$  of the curve $\Gamma^h(t)$ as
\begin{equation}\label{MRIpsit}
\Psi(t)=\frac{\max_{1\le j\le N}\;|\vec h_{j}(t)|}{\min_{1\le j\le N}\;|\vec h_{j}(t)|}
=\frac{\max_{1\le j\le N}\;\left|\vec X^h(\rho_j,t) - \vec X^h(\rho_{j-1},t)\right|}{\min_{1\le j\le N}\;\left|\vec X^h(\rho_j,t) - \vec X^h(\rho_{j-1},t)\right|},
\qquad t\ge0.
\end{equation}
Similar to the case of the PFEM for time evolution of a closed curve controlled by surface diffusion in \citet{Barrett2011,Barrett07}, we can show that the property of equal mesh distribution of $\Gamma^h(t)$ provided that
any three adjacent vertices on $\Gamma^h(t)$ are not co-linear.

\begin{prop}
\label{prop:semiequalmesh}
Let $\Bigl(\vec X^h(\cdot,~t),~\kappa^h(\cdot,~t)\Bigr)$ be the solution of the semi-discretization \eqref{eqn:2dsemi} and under the assumption \eqref{hjt987}, then we have
\begin{equation}
\label{eqn:semiequalmesh}
\left(|\vec h_{j+1}| - |\vec h_{j}|\right)
\left(|\vec h_j|\;|\vec h_{j+1}| - \vec h_{j}\cdot\vec h_{j+1}\right)=0, \qquad j=1,2,\ldots, N-1, \qquad t>0.
\end{equation}
Specifically, if any three adjacent vertices on $\Gamma^h(t)$ are not co-linear, then $\Gamma^h(t)$ satisfies equal mesh distribution, i.e.
\begin{equation} \label{MRIht789}
|\vec h_{1}(t)| = |\vec h_{2}(t)|=\ldots =|\vec h_{N}(t)| \quad \Leftrightarrow \quad \Psi(t)=1, \quad t>0.
\end{equation}
\end{prop}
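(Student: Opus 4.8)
The plan is to extract a pointwise (vertex-wise) consequence of the second semi-discrete equation \eqref{eqn:2dsemi2} by testing it with the nodal basis functions supported at a single interior node. First I would fix an interior index $j$ with $1\le j\le N-1$ and let $\phi_j\in\mathbb{K}^h$ be the hat function determined by $\phi_j(\rho_i)=\delta_{ij}$, so that $\phi_j$ is supported on $I_j\cup I_{j+1}$ and vanishes at the two endpoints $\rho=0,1$. Taking $\boldsymbol{\omega}^h=(\phi_j,0)^T$ and $\boldsymbol{\omega}^h=(0,\phi_j)^T$ in \eqref{eqn:2dsemi2} is admissible since both lie in $\mathbb{X}^h$, and because $\phi_j(0)=\phi_j(1)=0$ every boundary contribution (the $\frac1\eta[\cdots]$ and $\sigma[\cdots]$ terms) drops out. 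Hence for an arbitrary constant vector $\vec e\in\mathbb{R}^2$ the test function $\boldsymbol{\omega}^h=\phi_j\,\vec e$ yields $\bigl(\kappa^h,\vec n^h\cdot\phi_j\vec e\bigr)_{\Gamma^h(t)}^h=\bigl(\partial_s\vec X^h,\partial_s(\phi_j\vec e)\bigr)_{\Gamma^h(t)}$.

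Next I would evaluate the two sides explicitly. Writing $\kappa_j:=\kappa^h(\rho_j,t)$ and using the definition \eqref{eqn:semiproduct} of the mass-lumped product together with the fact that $\phi_j$ is nonzero at exactly one node while $\vec n^h$ jumps there from $\vec n^h_j$ to $\vec n^h_{j+1}$, the left-hand side reduces to $\tfrac12\kappa_j\,\vec e\cdot\bigl(|\vec h_j|\vec n^h_j+|\vec h_{j+1}|\vec n^h_{j+1}\bigr)$. For the right-hand side I would use $\partial_s\vec X^h=\boldsymbol{\tau}^h_k$ and $\partial_s\phi_j=\pm|\vec h_k|^{-1}$ on $I_k$ (the sign being $+$ on $I_j$ and $-$ on $I_{j+1}$), so that integrating the piecewise-constant integrand over the two edges gives $\vec e\cdot(\boldsymbol{\tau}^h_j-\boldsymbol{\tau}^h_{j+1})$. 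Invoking \eqref{eqn:semitannorm} in the form $|\vec h_k|\vec n^h_k=-\vec h_k^{\perp}$ collapses the normal combination to $|\vec h_j|\vec n^h_j+|\vec h_{j+1}|\vec n^h_{j+1}=-(\vec h_j+\vec h_{j+1})^{\perp}$. Since $\vec e$ is arbitrary, this produces the vertex identity
\begin{equation*}
-\tfrac12\kappa_j\,(\vec h_j+\vec h_{j+1})^{\perp}=\boldsymbol{\tau}^h_j-\boldsymbol{\tau}^h_{j+1}=\frac{\vec h_j}{|\vec h_j|}-\frac{\vec h_{j+1}}{|\vec h_{j+1}|},\qquad 1\le j\le N-1.
\end{equation*}

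The decisive observation is that the left-hand side of this identity is a scalar multiple of $(\vec h_j+\vec h_{j+1})^{\perp}$ and is therefore orthogonal to $\vec h_j+\vec h_{j+1}$. Taking the Euclidean inner product of the identity with $\vec h_j+\vec h_{j+1}$ annihilates the curvature unknown $\kappa_j$ entirely and leaves the scalar relation $\bigl(\vec h_j+\vec h_{j+1}\bigr)\cdot\bigl(\tfrac{\vec h_j}{|\vec h_j|}-\tfrac{\vec h_{j+1}}{|\vec h_{j+1}|}\bigr)=0$. Expanding the four inner products and factoring out $(|\vec h_j|-|\vec h_{j+1}|)$ turns this into $(|\vec h_j|-|\vec h_{j+1}|)\bigl(|\vec h_j||\vec h_{j+1}|-\vec h_j\cdot\vec h_{j+1}\bigr)/(|\vec h_j||\vec h_{j+1}|)=0$; multiplying through by $|\vec h_j||\vec h_{j+1}|>0$ (guaranteed by \eqref{hjt987}) gives exactly \eqref{eqn:semiequalmesh}.

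Finally, for the equidistribution statement I would argue that if the three consecutive vertices $\vec X^h(\rho_{j-1},t)$, $\vec X^h(\rho_j,t)$, $\vec X^h(\rho_{j+1},t)$ are not co-linear then $\vec h_j$ and $\vec h_{j+1}$ are non-parallel, so the strict Cauchy--Schwarz inequality gives $\vec h_j\cdot\vec h_{j+1}<|\vec h_j||\vec h_{j+1}|$, i.e.\ the second factor in \eqref{eqn:semiequalmesh} is strictly positive; the first factor must then vanish, forcing $|\vec h_j(t)|=|\vec h_{j+1}(t)|$. Ranging over $j=1,\dots,N-1$ yields $|\vec h_1(t)|=\cdots=|\vec h_N(t)|$, equivalently $\Psi(t)=1$ by \eqref{MRIpsit}, while the reverse implication is immediate. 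The main obstacle I anticipate is purely in the careful bookkeeping of the second step---correctly accounting for the jump of the piecewise-constant normal $\vec n^h$ at $\rho_j$ inside the mass-lumped product, keeping the exact (non-lumped) inner product in the stiffness term, and recognizing the orthogonality $(\vec h_j+\vec h_{j+1})^{\perp}\perp(\vec h_j+\vec h_{j+1})$ that lets the clever test direction eliminate $\kappa_j$; once these are in place the remaining manipulations are routine algebra.
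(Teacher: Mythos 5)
Your proposal is correct, and it is precisely the standard Barrett--Garcke--N\"urnberg argument (test \eqref{eqn:2dsemi2} with a hat function times a constant vector at each interior node, then dot the resulting vertex identity with $\vec h_j+\vec h_{j+1}$ to annihilate the curvature term) that the paper itself invokes by citation rather than writing out. In other words, you have supplied in full the very proof the paper omits and defers to Remark 2.4 of \citet{Barrett07} and Theorem 62 of \citet{Barrett20}, so the approach is essentially identical.
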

\begin{proof}
This equidistribution property is a direct consequence of Eq.~\eqref{eqn:2dsemi2}. We omit the proof here, since the proof can be found from Remark 2.4 in \citet{Barrett07} or Theorem 62 in \citet{Barrett20}.
\end{proof}

\subsection{Long-time behavior and equilibrium state}

Define  the curvature variation indictor (CVI)
$D(t)$ of the curve $\Gamma^h(t)$ as
\begin{equation}
D(t)=\Bigl(\partial_s\kappa^{h},~\partial_s\kappa^h\Bigr)_{{\Gamma^h(t)}},
\qquad t\ge0.
\end{equation}
Then, we have


\begin{prop}
\label{prop:semi2}
Let $\Bigl(\vec X^h(\cdot,~t),~\kappa^h(\cdot,~t)\Bigr)$ be the solution of the semi-discretization \eqref{eqn:2dsemi} and assume
that $\min_{1\le j\le N}\;|\vec h_{j}(t)|>0$ for $t\in[0,\infty)$ and when $t\to+\infty$,
$\vec X^h(\cdot,~t)$ and $\kappa^h(\cdot,~t)$ converge to the equilibrium $\Gamma^e=\vec X^e(\rho)=(x^e(\rho),y^e(\rho))^T\in \mathbb{X}^h$ and $\kappa^e(\rho)\in\mathbb{K}^h$, respectively, satisfying $\min_{1\le j\le N}\;|\vec h_{j}^e|>0$ with $\vec h_{j}^e:=\vec X^e(\rho_j) - \vec X^e(\rho_{j-1})$ for $1\le j\le N$. Then we have
\begin{eqnarray}\label{lb1345}
&&\lim_{t\to +\infty} D(t)=D^e:=\Bigl(\partial_s\kappa^{e},~\partial_s\kappa^e\Bigr)_{{\Gamma^e}}=0,
\quad \Rightarrow \quad \kappa^e(\rho)\equiv\kappa^c,\quad 0\le \rho\le1,\\
&&\lim_{t\to +\infty} \Psi(t)=\Psi^e:=\frac{\max_{1\le j\le N}\;|\vec h_{j}^e|}{\min_{1\le j\le N}\;|\vec h_{j}^e|}
=\frac{\max_{1\le j\le N}\;\left|\vec X^e(\rho_j) - \vec X^e(\rho_{j-1})\right|}{\min_{1\le j\le N}\;\left|\vec X^e(\rho_j) - \vec X^e(\rho_{j-1})\right|}=1,
\label{lb13456}
\end{eqnarray}
where $\kappa^c\ne 0$ is a constant. Furthermore, denote $\theta_e^l$ and $\theta_e^r$ as the left and right contact angles, respectively,  of $\Gamma^e$, then there exists $h_0>0$ sufficiently small
 such that
\begin{equation}\label{thetarl156}
|\cos(\theta_e^l) - \sigma|\le Ch,\qquad |\cos(\theta_e^r) - \sigma| \le Ch,
\qquad 0<h\le h_0,
\end{equation}
where $C>0$ is a constant.
\end{prop}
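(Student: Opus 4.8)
The plan is to establish the three assertions in turn: \eqref{lb1345} from the discrete energy law, \eqref{lb13456} from the equidistribution identity \eqref{eqn:semiequalmesh}, and \eqref{thetarl156} by testing the discrete variational equation \eqref{eqn:2dsemi2} against boundary hat functions. For \eqref{lb1345}, I would integrate the dissipation identity \eqref{eqn:dsemienergy} in time. Since $W^h(t)$ is nonincreasing and bounded below (note $\sum_j|\vec h_j|\ge x_c^r-x_c^l$ and $\sigma<1$ give $W^h(t)\ge(1-\sigma)(x_c^r-x_c^l)\ge0$), the limit $\lim_{t\to\infty}W^h(t)$ exists, and \eqref{eqn:dsemienergy} yields
\begin{equation}
\int_0^\infty D(t)\,dt \le \int_0^\infty\Bigl(-\tfrac{d}{dt}W^h(t)\Bigr)dt = W^h(0)-\lim_{t\to\infty}W^h(t) < +\infty .
\end{equation}
Because $D(t)$ is a continuous functional of the finitely many nodal positions and curvature values, the assumed convergence $\vec X^h\to\vec X^e$, $\kappa^h\to\kappa^e$ gives $D(t)\to D^e$; a strictly positive limit would force the integral above to diverge, whence $D^e=0$. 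As $\kappa^e\in\mathbb{K}^h$ is continuous with $\partial_s\kappa^e\equiv0$, it must be a global constant $\kappa^c$. Finally $\kappa^c\ne0$, for by mass conservation \eqref{massconsd} the equilibrium $\Gamma^e$ encloses the positive area $A^h(0)>0$, which excludes the degenerate straight segment on the substrate that $\kappa^c=0$ would produce.

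For \eqref{lb13456}, I would pass to the limit $t\to\infty$ in the identity \eqref{eqn:semiequalmesh} of Proposition \ref{prop:semiequalmesh}, each factor being continuous in the nodal positions, obtaining $\bigl(|\vec h_{j+1}^e|-|\vec h_j^e|\bigr)\bigl(|\vec h_j^e|\,|\vec h_{j+1}^e|-\vec h_j^e\cdot\vec h_{j+1}^e\bigr)=0$. It then remains to rule out co-linearity of three consecutive vertices. Testing \eqref{eqn:2dsemi2} at equilibrium (where $dx_c^l/dt=dx_c^r/dt=0$) with an interior hat function times an arbitrary constant vector, and writing $\boldsymbol\tau_j^e,\vec n_j^e$ for the unit tangent and normal on the $j$-th segment of $\Gamma^e$ as in \eqref{eqn:semitannorm}, gives
\begin{equation}
\tfrac{\kappa^c}{2}\bigl(|\vec h_j^e|\,\vec n_j^e + |\vec h_{j+1}^e|\,\vec n_{j+1}^e\bigr) = \boldsymbol\tau_j^e-\boldsymbol\tau_{j+1}^e,\qquad 1\le j\le N-1 .
\end{equation}
If three adjacent vertices were co-linear the right-hand side would vanish while the left-hand side would not, since $\kappa^c\ne0$ and the edge lengths are positive; hence no three adjacent vertices are co-linear, so the second factor in the limit of \eqref{eqn:semiequalmesh} is nonzero by the equality case of Cauchy--Schwarz. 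Consequently $|\vec h_{j+1}^e|=|\vec h_j^e|$ for every $j$, i.e.\ $\Psi^e=1$.

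For \eqref{thetarl156}, I would again use that all contact-point velocities vanish at equilibrium and test \eqref{eqn:2dsemi2} with $\boldsymbol\omega^h=(\phi_0,0)^T$, where $\phi_0\in\mathbb{K}^h$ is the hat function at $\rho_0=0$. The mass-lumped curvature term contributes $\tfrac12|\vec h_1^e|\kappa^c(\vec n_1^e)_x$, the stiffness term contributes $(\boldsymbol\tau_1^e)_x=\cos\theta_e^l$, and the boundary term contributes $-\sigma$, so that
\begin{equation}
\cos\theta_e^l - \sigma = -\tfrac12\,|\vec h_1^e|\,\kappa^c\,(\vec n_1^e)_x .
\end{equation}
Using $|(\vec n_1^e)_x|\le1$ and the equidistribution $|\vec h_1^e|=L^e h$ (with $L^e=\sum_j|\vec h_j^e|$ the equilibrium length) gives $|\cos\theta_e^l-\sigma|\le\tfrac12|\kappa^c|L^e h=:Ch$; the right contact point is treated identically with $\boldsymbol\omega^h=(\phi_N,0)^T$, yielding $\cos\theta_e^r-\sigma=\tfrac12|\vec h_N^e|\kappa^c(\vec n_N^e)_x$ and the same bound. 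The constant $C=\tfrac12|\kappa^c|L^e$ stays bounded for all $0<h\le h_0$ since $\kappa^c$ and $L^e$ approach their continuum equilibrium values.

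I expect the main obstacle to be the bookkeeping in the last step: correctly identifying the discrete contact angle with $(\boldsymbol\tau_1^e)_x$, tracking the signs and the mass-lumping weight $\tfrac12|\vec h_1^e|$, and recognizing that precisely this lumped curvature term is the $O(h)$ defect in the contact-angle condition. A secondary subtlety is the justification of $\kappa^c\ne0$ together with the resulting non-co-linearity, which is exactly what couples \eqref{lb1345} to the equidistribution \eqref{lb13456}.
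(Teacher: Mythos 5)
Your argument follows the paper's proof in all essentials: the equilibrium variational system obtained by letting $t\to+\infty$, the interior-hat identity $\frac{\kappa^c}{2}\bigl(|\vec h_j^e|\vec n_j^e+|\vec h_{j+1}^e|\vec n_{j+1}^e\bigr)=\boldsymbol{\tau}_j^e-\boldsymbol{\tau}_{j+1}^e$ used to rule out co-linearity of adjacent segments, and the boundary-hat computation giving $\cos\theta_e^l-\sigma=-\frac12|\vec h_1^e|\kappa^c\,(\vec n_1^e\cdot{\bf e}_1)$ with $|\vec h_1^e|=L^eh$ are exactly the paper's steps. (The paper obtains $D^e=0$ more directly by taking $\psi^h=\kappa^e$ in the equilibrium equation rather than integrating the dissipation law in time; both routes are valid.) The one place you assert rather than prove is $\kappa^c\ne0$: you say that $\kappa^c=0$ ``would produce'' a degenerate straight segment on the substrate, but that implication itself needs an argument, and as written you invoke $\kappa^c\ne0$ before deriving the interior-node identity that would supply it. The claim is recoverable --- with $\kappa^c=0$ your identity gives $\boldsymbol{\tau}_j^e=\boldsymbol{\tau}_{j+1}^e$ for all $j$, so $\Gamma^e$ is a single straight segment joining two points of the substrate, forcing $y^e\equiv0$ and $A^e=0$, contradicting $A^e=A^h(0)>0$ --- but you must say so and reorder the steps. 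The paper sidesteps this with a cleaner one-line test: choosing $\boldsymbol{\omega}^h=(0,\,y^e)^T$ in \eqref{eqn:equi2} yields $\kappa^c\bigl(1,\vec n^e\cdot\boldsymbol{\omega}^h\bigr)_{\Gamma^e}^h=\bigl(\partial_sy^e,\partial_sy^e\bigr)_{\Gamma^e}>0$ (the right side is positive because area conservation forces $y^e\not\equiv0$), whence $\kappa^c\ne0$ immediately. With that step repaired, the rest of your proposal is correct.
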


\begin{proof}
Under the assumptions in this Proposition, letting $t\to+\infty$ in
the semi-discretization \eqref{eqn:2dsemi}, we obtain that
the equilibrium solution $\bigl(\vec X^e(\rho),~\kappa^e(\rho)\bigr)\in\mathbb{X}^h\times \mathbb{K}^h$ satisfies the following variational problem
\begin{subequations}
\label{eqn:equi}
\begin{align}
\label{eqn:equi1}
&\left(\partial_s\kappa^e,~\partial_s\psi^h\right)_{\Gamma^e}=0,\qquad\forall\psi^h\in\mathbb{K}^h,\\
&\left(\kappa^e,\vec n^e\cdot\boldsymbol{\omega}^h\right)_{\Gamma^e}^h - \left(\partial_s\vec X^e,~\partial_s\boldsymbol{\omega^h}\right)_{\Gamma^e} + \sigma \,\left[\omega_1^h(1) - \omega_1^h(0)\right]=0,\quad\forall\boldsymbol{\omega}^h
=(\omega_1^h,~\omega_2^h)^T\in\mathbb{X}^h,
\label{eqn:equi2}
\end{align}
\end{subequations}
where
\begin{equation}\label{hetauene}
\vec h_{j}^e=\vec X^e(\rho_j) - \vec X^e(\rho_{j-1}),\quad
\boldsymbol{\tau}^e|_{I_j}=\frac{\vec h_j^e}{|\vec h_j^e|}:=\boldsymbol{\tau}^e_j,\quad
\vec{n}^e|_{I_j}
=-({\boldsymbol{\tau}}^e_j)^{\perp}:=\vec{n}^e_j, \quad j=1,2\ldots,N.
\end{equation}
Choosing $\psi^h=\kappa^e(\rho)\in \mathbb{K}^h$ in \eqref{eqn:equi1} and noting $D(t)$ being a continuous function for $t>0$, we obtain
\begin{equation}\label{Dekbb}
\lim_{t\to\infty}D(t)=D^e=\bigl(\partial_s\kappa^e,~\partial_s\kappa^e\bigr)_{\Gamma^e}=0.
\end{equation}
Combining \eqref{Dekbb} and \eqref{eqn:semiproduct}, and noticing
$\kappa^e\in \mathbb{K}^h\subset C([0,1])$, we get
\begin{equation}
\label{eqn:semip1}
\kappa^e(\rho)\equiv\kappa^c,\qquad 0\leq \rho\leq 1.
\end{equation}
From the mass conservation \eqref{massconsd} and noticing \eqref{eqn:semimassenergy}, we get
\begin{equation}\label{Ae987}
A^e:=\frac{1}{2}\sum_{j=1}^N\left(x^e(\rho_j)-
x^e(\rho_{j-1})\right)\left(y^e(\rho_j)+y^e(\rho_{j-1})\right)=A^h(0)>0
\quad \Rightarrow \quad
y^e(\rho)\not\equiv 0.
\end{equation}
Choosing $\boldsymbol{\omega}^h=(0,~y^e)^T\in \mathbb{X}^h$ in \eqref{eqn:equi2}, noticing $\boldsymbol{\omega}^h(0)=\boldsymbol{\omega}^h(1)={\bf 0}$ and noting \eqref{eqn:semip1}, we have
\begin{equation}\label{kappc986}
0=\left(\kappa^e,\vec n^e\cdot\boldsymbol{\omega}^h\right)_{\Gamma^e}^h-
\left(\partial_s\vec X^e,~\partial_s\boldsymbol{\omega^h}\right)_{\Gamma^e}
=\kappa^c\left(1,\vec n^e\cdot\boldsymbol{\omega}^h\right)_{\Gamma^e}^h
- \left(\partial_sy^e,~\partial_sy^e\right)_{\Gamma^e}.
\end{equation}
Combining \eqref{kappc986} and \eqref{Ae987}, noting $y^e\in \mathbb{K}_0^h$,
we obtain
\be\label{kappc876}
\kappa^c\left(1,\vec n^e\cdot\boldsymbol{\omega}^h\right)_{\Gamma^e}^h=
 \left(\partial_sy^e,~\partial_sy^e\right)_{\Gamma^e}>0\quad \Rightarrow
\quad \kappa^c\ne0.
\ee

From \eqref{eqn:equi}, similar to the proof in the Proposition 3.2, we can prove
\begin{equation}
\label{eqn:semiequalmeeqm}
\left(|\vec h_{j+1}^e| - |\vec h_{j}^e|\right)
\left(|\vec h_j^e|\;|\vec h_{j+1}^e| - \vec h_{j}^e\cdot\vec h_{j+1}^e\right)=0, \qquad j=1,2,\ldots, N-1.
\end{equation}
Now, we adopt the method of contradiction to show
\be\label{mc876}
\boldsymbol{\tau}^e_j=\frac{\vec h_j^e}{|\vec h_j^e|}\ne \boldsymbol{\tau}^e_{j+1}=\frac{\vec h_{j+1}^e}{|\vec h_{j+1}^e|}, \qquad
1\le j\le N-1.
\ee
Assume that \eqref{mc876} is not true, i.e. there exists an integer $j_0$ ($1\le j_0\le N-1$) such that
\be
\boldsymbol{\tau}_{j_0}^e=\frac{\vec h_{j_0}^e}{|\vec h_{j_0}^e|} = \boldsymbol{\tau}_{j_0+1}^e=\frac{\vec h_{j_0+1}^e}{|\vec h_{j_0+1}^e|}
\quad \Rightarrow \quad \vec n_{j_0}^e = \vec n_{j_0+1}^e.
\ee
Let $\boldsymbol{\omega}^h(\rho)\in \mathbb{X}^h$ such that
\begin{equation}\label{wh1234567}
\boldsymbol{\omega}^h(\rho=\rho_k)=\left\{\begin{array}{ll}
\vec n_{j_0}^e, &k=j_0,\\
{\bf 0}, &k\ne j_0,\\
\end{array}\right. \qquad k=0,1,\ldots, N,
\end{equation}
noting \eqref{hjt987}, then we have
\begin{equation}\label{psomh765}
\partial_s\boldsymbol{\omega^h}(\rho)=\vec n_{j_0}^e
\left\{\ba{ll}
1/|\vec h_{j_0}|, &\rho_{j_0-1}\le \rho<\rho_{j_0},\\
-1/|\vec h_{j_0+1}|, &\rho_{j_0}\le \rho< \rho_{j_0+1},\\
0,  &\hbox{otherwise}.
\ea\right.
\end{equation}
Combining \eqref{psomh765} and \eqref{hetauene}, noticing $\left.\partial_s\vec X^e\right|_{I_k}=\boldsymbol{\tau}^e_k$ for
$1\le k\le N-1$, we have
\begin{eqnarray}\label{khnhwh112}
\left(\partial_s\vec X^e,~\partial_s\boldsymbol{\omega}^h
\right)_{\Gamma^e}&=&\frac{1}{2}\left[2|\vec h_{j_0}^e|
\boldsymbol{\tau}^e_{j_0}\cdot\vec n_{j_0}^e/|\vec h_{j_0}^e|-
2|\vec h_{j_0+1}^e|
\boldsymbol{\tau}^e_{j_0+1}\cdot\vec n_{j_0+1}^e/|\vec h_{j_0+1}^e| \right]\nonumber\\
&=&\boldsymbol{\tau}^e_{j_0}\cdot\vec n_{j_0}^e-\boldsymbol{\tau}^e_{j_0+1}\cdot\vec n_{j_0+1}^e=0.
\end{eqnarray}
Inserting $\boldsymbol{\omega}^h(\rho)$ in \eqref{wh1234567} into
\eqref{eqn:equi2}, noticing \eqref{khnhwh112}, \eqref{eqn:semip1}, \eqref{eqn:semiproduct} and \eqref{hetauene}, and noting  $\boldsymbol{\omega}^h(0)=\boldsymbol{\omega}^h(1)={\bf 0}$,  we obtain
\begin{eqnarray}\label{kpaeb76}
0&=&\left(\kappa^e,\vec n^e\cdot\boldsymbol{\omega}^h\right)_{\Gamma^e}^h-\left(\partial_s\vec X^e,~\partial_s\boldsymbol{\omega}\right)_{\Gamma^e}
=\kappa^c\left(1,\vec n^e\cdot\boldsymbol{\omega}^h\right)_{\Gamma^e}^h\nonumber\\
&=&\frac{1}{2}\kappa^c
\left[|\vec h_{j_0}^e|\; \vec n^e_{j_0}\cdot \vec n^e_{j_0}
+|\vec h_{j_0+1}^e|\; \vec n^e_{j_0+1}\cdot \vec n^e_{j_0+1}\right]\nonumber\\
&=&\frac{1}{2}\kappa^c
\left[|\vec h_{j_0}^e|
+|\vec h_{j_0+1}^e| \right].
\end{eqnarray}
Thus we get $\kappa^c=0$, which contradicts with $\kappa^c\ne 0$ in
\eqref{kappc876}, and therefore \eqref{mc876} is valid. From \eqref{mc876}, we have
\be\label{tauj76543}
0\ne 1-\boldsymbol{\tau}^e_j\cdot \boldsymbol{\tau}^e_{j+1}=
1-\frac{\vec h_j^e}{|\vec h_j^e|}\cdot \frac{\vec h_{j+1}^e}{|\vec h_{j+1}^e|}=\frac{1}{|\vec h_j^e|\;|\vec h_{j+1}^e|}\left(|\vec h_j^e|\;|\vec h_{j+1}^e|-\vec h_j^e\cdot\vec h_{j+1}^e\right), \quad 1\le j\le N-1.
\ee
Combining \eqref{eqn:semiequalmeeqm} and \eqref{tauj76543}, we get
\be\label{eqn:semip27}
|\vec h_1^e|=|\vec h_2^e|=\ldots=|\vec h_N^e|,
\ee
which immediately implies \eqref{lb13456} by noting that $\Psi(t)$ is a continuous function for $t>0$.

Denoting $L^e:= |\Gamma^e|>0$ as the length of $\Gamma^e$ and noticing \eqref{eqn:semip27}, we have
\begin{equation}
\label{eqn:semip3}
|\vec h_j^e| = \frac{L^e}{N} = L^e h,\qquad  1\leq j\leq N.
\end{equation}
Choosing $\boldsymbol{\omega}^h(\rho)\in \mathbb{X}^h$ which satisfies
\begin{equation}\label{wh12345678}
\boldsymbol{\omega}^h(\rho=\rho_k)=\left\{\begin{array}{ll}
{\bf e}_1:=(1,0)^T, &k=0,\\
{\bf 0}, &1\le k\le N,\\
\end{array}\right.
\end{equation}
in \eqref{eqn:equi2}, carrying a similar computation as in \eqref{psomh765}, noting~\eqref{eqn:semip1} and \eqref{eqn:semiproduct}, \eqref{eqn:semip3},
we obtain
\bea
0&=&\left(\kappa^e,\vec n^e\cdot\boldsymbol{\omega}^h\right)_{\Gamma^e}^h-\left(\partial_s\vec X^e,~\partial_s\boldsymbol{\omega}^h\right)_{\Gamma^e}-\sigma\nonumber\\
&=&\frac{1}{2}\kappa^c\,|\vec h_1^e| \; \vec n^e_{1}\cdot {\bf e}_1+
\frac{1}{2} |\vec h_1^e| \bigl[\boldsymbol{\tau}^e_1 \cdot {\bf e}_1/|\vec h_1^e|+\boldsymbol{\tau}^e_1 \cdot {\bf e}_1/|\vec h_1^e|\bigr]-\sigma\nonumber\\
&=&\frac{1}{2}\kappa^cL^e (\vec n^e_{1}\cdot {\bf e}_1)h+\cos(\theta_e^l) -\sigma,
\eea
where we use the relation $\cos(\theta_e^l)=\boldsymbol{\tau}^e_1 \cdot {\bf e}_1$.
Thus there exists $h_0>0$ sufficiently small such that, when $0<h\le h_0$, we have
\be\label{thetarl1}
\left|\cos(\theta_e^l) - \sigma\right| = \left|-
\frac{1}{2}\kappa^cL^e (\vec n^e_{1}\cdot {\bf e}_1)h\right|
=\frac{1}{2} L^e\, |\kappa^c|\; |\vec n^e_{1}\cdot {\bf e}_1|\,h
\le Ch,
\ee
where $C>0$ is a constant. Similarly, we can prove the right inequality
in  \eqref{thetarl156}.
\end{proof}

\begin{remark}
The above Proposition~\ref{prop:semi2} shows that the equilibrium state
obtained by the semi-discretization \eqref{eqn:2dsemi} has the following properties: (i) it has constant curvature; (ii) it has equal mesh
distribution and the MRI $\Psi(t)\approx 1$ when $t\gg1$;
and  (iii) its contact angles converge  to
the Young's contact angle $\theta_i$ (i.e., theoretical equilibrium contact angle) linearly (or at first-order) with respect to the mesh size $h$.
\end{remark}

\section{A full-discretization}

In this section, based on the idea in~\citet{Barrett07b,Barrett10}, we present an energy-stable parametric finite element method
(ES-PFEM) to discretize the semi-discretization \eqref{eqn:2dsemi} by adopting the (semi-implicit) backward Euler method in time and show the well-posedness and
energy dissipation of the full discretization.

\subsection{The discretization}

Take $\tau>0$ as the uniform time step size and denote discrete time levels as
$t_m=m\tau$ for $m=0,1,\ldots$. Then, for $m\ge0$, let $\Gamma^m:=\vec X^m=\vec X^m(\rho)=(x^m(\rho),y^m(\rho))^T\in
\mathbb{X}^h$ be the numerical approximation
of the solution $\Gamma^h(t_m)=\vec X^h(\cdot,t_m)\in \mathbb{X}^h $  of the semi-discretization \eqref{eqn:2dsemi} at $t=t_m$.
Again, $\{\Gamma^m\}_{m\ge0}$ are polygonal curves consisting of ordered line segments. For each curve $\Gamma^m$, we note that the unit tangential vector $\boldsymbol{\tau}^m$ and normal vector $\vec n^m$ are constant vectors on each interval $I_j$ with possible discontinuities or jumps at nodes $\rho_j$, and they can be easily computed as
\begin{equation}
\boldsymbol{\tau}^m|_{I_j}=\frac{\vec h_j^m}{|\vec h_j^m|}:=\boldsymbol{\tau}^m_j=\bigl(\tau_{j,1}^{m},~\tau_{j,2}^{m}\bigr)^T,\quad
\vec{n}^m|_{I_j}
=-({\boldsymbol{\tau}}^m_j)^{\perp}:=\vec{n}^m_j=
\bigl(n_{j,1}^{m},~n_{j,2}^{m}\bigr)^T,\quad 1\leq j\leq N,
\label{eqn:semitannormfd}
\end{equation}
where $\vec h_j^m=\vec X^m(\rho_{j})-\vec X^m(\rho_{j-1})$ for $j=1,2,\ldots,N$.

By adopting the (semi-implicit) backward Euler method to discretize the semi-discretizaiton
\eqref{eqn:2dsemi} in time, we obtain an ES-PFEM as a full-discretization
of the variational problem \eqref{eqn:2dweak} (i.e., for the sharp interface model (\ref{eqn:2diso}) with boundary conditions (\ref{eqn:weakBC1})-(\ref{eqn:weakBC3}) and initial condition \eqref{eqn:initial}) as follows:  Given the initial curve $\Gamma^0:=\vec X^{0}=(x^{0}(\rho),~y^{0}(\rho))^T\in \mathbb{X}^h$ and set $x_l^0=x^0(\rho=0)<x_r^0=x^0(\rho=1)$, for $m\ge0$, find the evolution curves $\Gamma^{m+1}:=\vec X^{m+1}=(x^{m+1}(\rho),~y^{m+1}(\rho))^T\in \mathbb{X}^h$, and the curvature $\kappa^{m+1}=\kappa^{m+1}(\rho)\in \mathbb{K}^h$ such that
\begin{subequations}
\label{eqn:dis2d}
\begin{align}
\label{eqn:dis2d1}
&\Bigl(\frac{\vec X^{m+1}-\vec X^m}{\tau}\cdot\vec n^m,~\psi^h\Big)_{\Gamma^m}^h + \Bigl(\partial_s\kappa^{m+1},~\partial_s\psi^h\Bigr)_{\Gamma^m} = 0,\qquad\forall\psi^h\in\mathbb{K}^h,\\
&\Bigl(\kappa^{m+1},\vec n^m\cdot\boldsymbol{\omega}^h\Bigr)_{\Gamma^m}^h - \Bigl(\partial_s\vec X^{m+1},~\partial_s\boldsymbol{\omega}^h\Bigr)_{\Gamma^m} + \sigma\,\Bigl[\omega_1^h(1) - \omega_1^h(0)\Bigr]  \nonumber\\
&\qquad\qquad\;-\;\frac{1}{\eta\,\tau}\Bigl[\omega_1^h(0)(x^{m+1}_l-x_l^m) +\omega_1^h(1)(x^{m+1}_r- x^m_r) \Bigr]=0,\quad\forall\boldsymbol{\omega}^h=(\omega_1^h,~\omega_2^h)^T\in \mathbb{X}^h;
\label{eqn:dis2d2}
\end{align}
\end{subequations}
where we adopt $x_l^{m+1}=x^{m+1}(\rho=0)$  and $x_r^{m+1}=x^{m+1}(\rho=1)$
and assume they satisfy $x_l^{m+1}\le x_r^{m+1}$. We remark here that, at each time step, only a linear system
is to be solved in the above ES-PFEM \eqref{eqn:dis2d}, which can be efficiently solved by either the GMRES method or the sparse LU decomposition in the practical computation.

We remark that the proposed ES-PFEM is an extension of the works in \cite{Barrett07b,Barrett10} to the case with relaxed contact angles. When $\eta=+\infty$, Eq.~\eqref{eqn:dis2d2} will reduce to the scheme with the isotropic Young's angle (see Eq.~(2.34) for a single open curve in \citet{Barrett07b} and Eq.~(3.11) in \citet{Barrett10}).

\subsection{Well-posedness}

For the well-posedness of the full-discretization \eqref{eqn:dis2d}, we have

\begin{thm}[Well-posedness] Assume that the following conditions are satisfied:
\begin{equation}
(\text{i}).~~(n^{m}_{1,1})^2+(n^{m}_{N,1})^2> 0;\qquad (\text{ii}).~~\min_{1\le j\le N} |\vec h^m_j|=\min_{1\le j\le N}|\vec X^m(\rho_{j})-\vec X^m(\rho_{j-1})|>0.
\label{eqn:assumption}
\end{equation}
Then the full-discretization \eqref{eqn:dis2d} is well-posedness, i.e.,
there exists a unique solution $\bigl(\vec X^{m+1},~\kappa^{m+1}\bigr)\in\bigl(\mathbb{X}^h,~\mathbb{K}^h\bigr)$.
\end{thm}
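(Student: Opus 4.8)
My plan rests on the observation that \eqref{eqn:dis2d} is a \emph{square} linear system: the unknowns $\bigl(\vec X^{m+1},\kappa^{m+1}\bigr)$ carry $(N+1)+(N-1)$ degrees of freedom from $\vec X^{m+1}\in\mathbb{X}^h=\mathbb{K}^h\times\mathbb{K}^h_0$ plus $N+1$ from $\kappa^{m+1}\in\mathbb{K}^h$, which matches exactly the number of test equations obtained by letting $\psi^h$ range over $\mathbb{K}^h$ and $\boldsymbol\omega^h$ over $\mathbb{X}^h$. Hence existence is equivalent to uniqueness, and it suffices to show that the associated homogeneous problem has only the trivial solution. So I would take the difference of two hypothetical solutions, write $\vec X=(x,y)^T$ and $\kappa$ for the differences, and note that the data-dependent terms (the $\sigma$-bracket and the $x_l^m,x_r^m$ contributions) are affine in the given data and therefore cancel. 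The remaining homogeneous problem is: find $(\vec X,\kappa)\in\mathbb{X}^h\times\mathbb{K}^h$ with
\be
\Bigl(\tfrac{\vec X}{\tau}\cdot\vec n^m,\psi^h\Bigr)^h_{\Gamma^m}+\bigl(\partial_s\kappa,\partial_s\psi^h\bigr)_{\Gamma^m}=0,\qquad\forall\,\psi^h\in\mathbb{K}^h,
\ee
\be
\bigl(\kappa,\vec n^m\cdot\boldsymbol\omega^h\bigr)^h_{\Gamma^m}-\bigl(\partial_s\vec X,\partial_s\boldsymbol\omega^h\bigr)_{\Gamma^m}-\tfrac{1}{\eta\tau}\bigl[\omega_1^h(0)\,x(0)+\omega_1^h(1)\,x(1)\bigr]=0,\qquad\forall\,\boldsymbol\omega^h\in\mathbb{X}^h,
\ee
and prove $\vec X\equiv\vec 0$, $\kappa\equiv0$.

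The key step is a coercivity test that mirrors the energy-dissipation computation of Proposition~3.1. I would choose $\psi^h=\kappa$ in the first equation and $\boldsymbol\omega^h=\vec X$ (so $\omega_1^h=x$, with boundary values $x(0),x(1)$) in the second, and use the symmetry of the mass-lumped inner product \eqref{eqn:semiproduct} to identify $\bigl(\tfrac{\vec X}{\tau}\cdot\vec n^m,\kappa\bigr)^h_{\Gamma^m}=\tfrac1\tau\bigl(\kappa,\vec n^m\cdot\vec X\bigr)^h_{\Gamma^m}$. Eliminating this common coupling term between the two equations yields
\be
\tau\bigl(\partial_s\kappa,\partial_s\kappa\bigr)_{\Gamma^m}+\bigl(\partial_s\vec X,\partial_s\vec X\bigr)_{\Gamma^m}+\tfrac{1}{\eta\tau}\bigl[x(0)^2+x(1)^2\bigr]=0.
\ee
Since $\tau>0$ and $\eta>0$, every term on the left is non-negative, so each vanishes separately. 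Under assumption~(ii), $\min_{1\le j\le N}|\vec h_j^m|>0$, the seminorms $(\partial_s\cdot,\partial_s\cdot)_{\Gamma^m}$ are genuine on the piecewise-linear spaces, whence $\partial_s\kappa=0$ and $\partial_s\vec X=0$; thus $\kappa\equiv\kappa^c$ is a constant and $\vec X$ is a constant vector, while $x(0)=x(1)=0$. Because $y\in\mathbb{K}^h_0$ vanishes at the endpoints, a constant $y$ forces $y\equiv0$, and the constant $x$ with $x(0)=0$ forces $x\equiv0$, so $\vec X\equiv\vec 0$.

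It then remains to show that the leftover constant curvature $\kappa^c$ vanishes, and this is precisely the step that invokes assumption~(i) and is, I expect, the main obstacle. With $\vec X\equiv\vec 0$ the second homogeneous equation collapses to $\kappa^c\bigl(1,\vec n^m\cdot\boldsymbol\omega^h\bigr)^h_{\Gamma^m}=0$ for all $\boldsymbol\omega^h\in\mathbb{X}^h$, so I must exhibit a single admissible test function for which the pairing $\bigl(1,\vec n^m\cdot\boldsymbol\omega^h\bigr)^h_{\Gamma^m}$ is non-zero. The difficulty is that at the two boundary nodes $\rho_0,\rho_N$ the second component of $\boldsymbol\omega^h$ is constrained to vanish ($\mathbb{X}^h=\mathbb{K}^h\times\mathbb{K}^h_0$), so only the $x$-direction is available there; this is exactly what assumption~(i), $(n^m_{1,1})^2+(n^m_{N,1})^2>0$, is designed to supply. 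Concretely, if $n^m_{1,1}\neq0$ I would take the nodal test field with $\boldsymbol\omega^h(\rho_0)={\bf e}_1=(1,0)^T$ and $\boldsymbol\omega^h(\rho_k)={\bf 0}$ for $k\ge1$; by the trapezoidal rule \eqref{eqn:semiproduct} only the $j=1$ contribution survives, giving $\bigl(1,\vec n^m\cdot\boldsymbol\omega^h\bigr)^h_{\Gamma^m}=\tfrac12|\vec h_1^m|\,n^m_{1,1}\neq0$ (non-zero by~(ii)), which forces $\kappa^c=0$; if instead $n^m_{N,1}\neq0$ the analogous hat field at $\rho_N$ does the job. Either way $\kappa\equiv0$, so the homogeneous system admits only the trivial solution, and by the square-system reduction the full discretization \eqref{eqn:dis2d} has a unique solution $\bigl(\vec X^{m+1},\kappa^{m+1}\bigr)\in\mathbb{X}^h\times\mathbb{K}^h$.
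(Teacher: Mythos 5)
Your proposal is correct and follows essentially the same route as the paper: reduce to the homogeneous square system, test with $\psi^h=\kappa$ and $\boldsymbol\omega^h=\vec X$ to get the coercive identity forcing $\vec X\equiv\vec 0$ and $\kappa\equiv\kappa^c$, then kill the constant $\kappa^c$ with a boundary-node test function whose pairing with $\vec n^m$ is nonzero by assumptions (i)--(ii). The only cosmetic difference is that the paper avoids your case split by taking a single test function with nodal values $(n^m_{1,1},0)^T$ at $\rho_0$ and $(n^m_{N,1},0)^T$ at $\rho_N$, which yields the sum $\tfrac12|\vec h^m_1|(n^m_{1,1})^2+\tfrac12|\vec h^m_N|(n^m_{N,1})^2>0$ directly.
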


\begin{proof} In order to show that the linear system \eqref{eqn:dis2d} has a unique solution, we need only to prove that the following homogeneous linear system only has the zero solution: Find $\bigl(\vec X^{m+1},~\kappa^{m+1}\bigr)\in\bigl(\mathbb{X}^h,~\mathbb{K}^h\bigr)$ such that
\begin{subequations}
\label{eqn:homodis2d}
\begin{align}
\label{eqn:homodis2d1}
&\Bigl(\vec X^{m+1}\cdot\vec n^m,~\psi^h\Bigr)_{\Gamma^m}^h + \tau\Bigl(\partial_s\kappa^{m+1},~\partial_s\psi^h\Bigr)_{\Gamma^m} = 0, \qquad \forall\psi^h\in\mathbb{K}^h,\\
&\Bigl(\kappa^{m+1},~\vec n^m\cdot\boldsymbol{\omega}^h\Bigr)_{\Gamma^m}^h - \Bigl(\partial_s\Vec X^{m+1},~\partial_s\boldsymbol{\omega}^h\Bigr)_{\Gamma^m} -\frac{x_l^{m+1}\omega_1^h(0)+x_r^{m+1}\omega_1^h(1) }{\tau \eta} = 0,\quad\forall\boldsymbol{\omega}^h\in \mathbb{X}^h.
\label{eqn:homodis2d2}
\end{align}
\end{subequations}
Choosing the test functions $\psi^h = \kappa^{m+1}$ and $\boldsymbol{\omega}^h=\vec X^{m+1}$ in \eqref{eqn:homodis2d1} and  \eqref{eqn:homodis2d2}, respectively,  and then subtracting the second one from the first one, we obtain
\begin{equation}
\tau\Bigl(\partial_s\kappa^{m+1},\partial_s\kappa^{m+1}\Bigr)_{\Gamma^m} + \Bigl(\partial_s\vec X^{m+1},~\partial_s\vec X^{m+1}\Bigr)_{\Gamma^m} + \frac{1}{\tau \eta}\Bigl[(x_l^{m+1})^2+(x_r^{m+1})^2\Bigr] = 0.
\end{equation}
This, together with $\vec X^{m+1}\in \mathbb{X}^h$, $\kappa^{m+1}\in \mathbb{K}^h$, $x_l^{m+1}=x^{m+1}(\rho=0)$ and $x_r^{m+1}=x^{m+1}(\rho=1)$, implies
\begin{equation}\label{xlxrmp1}
x_l^{m+1} = x_r^{m+1}=0, \qquad \vec X^{m+1}(\rho)\equiv0, \qquad \kappa^{m+1}(\rho)\equiv \kappa^c, \qquad 0\le \rho\le 1,
\end{equation}
where $\kappa^c$ is a constant to be determined.
Substituting \eqref{xlxrmp1} into \eqref{eqn:homodis2d2}, we have
\begin{equation} \label{nmgmh11}
0=\Bigl(\kappa^{c},~\vec n^m\cdot\boldsymbol{\omega}^h\Bigr)_{\Gamma^m}^h=\kappa^c\Bigl(\vec n^m,~\boldsymbol{\omega}^h\Bigr)_{\Gamma^m}^h,\quad\forall
\boldsymbol{\omega}^h\in\mathbb{X}^h.
\end{equation}
Choosing the test vector function $\boldsymbol{\omega}^h(\rho)\in\mathbb{X}^h$ such that
\begin{equation}\label{omgh11}
\boldsymbol{\omega}^h(\rho=\rho_j)=\left\{\begin{array}{ll}
\Bigl(n_{1,1}^{m}, ~0\Bigr)^T, &j=0,\\
\vec 0, &1\le j\le N-1,\\
\Bigl(n_{N,1}^{m},~0\Bigr)^T, &j=N,\\
\end{array}\right.
\end{equation}
and noting \eqref{eqn:semiproduct}, \eqref{omgh11} and \eqref{eqn:assumption}, we get
\begin{eqnarray}\label{omg1hcc}
\Bigl(\vec n^m,~\boldsymbol{\omega}^h\Bigr)_{\Gamma^m}^h&=&\frac{1}{2}\sum_{j=1}^{N}\big
|\vec h^m_j\big|\Big[\boldsymbol{\omega}^h(\rho_{j-1})\cdot\vec n_{j}^m+\boldsymbol{\omega}^h(\rho_{j})\cdot\vec n_{j}^m\Big]=\frac{1}{2}\big
|\vec h^m_1\big|\,\boldsymbol{\omega}^h(\rho_{0})\cdot\vec n_{1}^m
+\frac{1}{2}\big
|\vec h^m_N\big|\,\boldsymbol{\omega}^h(\rho_{N})\cdot\vec n_{N}^m \nonumber\\
&=&\frac{1}{2}\big
|\vec h^m_1\big|\,(n_{1,1}^{m})^2+
\frac{1}{2}\big
|\vec h^m_N\big|\,(n_{N,1}^{m})^2\ge C\left((n_{1,1}^{m})^2+(n_{N,1}^{m})^2\right) >0,
\end{eqnarray}
where $C=\frac{1}{2}\min\left\{\big
|\vec h^m_1\big|, \big
|\vec h^m_N\big|\right\}$.

Combining \eqref{nmgmh11} and \eqref{omg1hcc}, we get $\kappa^c=0$
and thus $\kappa^{m+1}(\rho)\equiv 0$ for $0\le \rho\le 1$.
Therefore, we prove that the corresponding homogeneous linear system \eqref{eqn:homodis2d} only has the zero solution, and consequently, the original inhomogeneous linear system \eqref{eqn:dis2d} has a unique solution.
\end{proof}

\medskip

We remark here that the condition \eqref{eqn:assumption} is in fact quite a natural assumption, which implies that: (i) either the first or last line segment of the polygonal curve $\Gamma^m$ is not parallel to the $x$-axis; and (ii) any two neighbouring mesh points along the polygonal curve $\Gamma^m$ are distinct.

\subsection{Energy dissipation}

 Denote
\begin{equation}\label{Wmm11}
W^m:=W(\Gamma^m)=|\Gamma^m|-\sigma (x_r^m-x_l^m), \qquad m=0,1.\ldots
\end{equation}
Similar to the previous works in the literature (see Theorem 2.3 in \citet{Barrett07} and Theorem 87 in~\citet{Barrett20}), we can prove

\begin{thm}[Unconditional energy stability]
\label{Th:2Dstable}
Let $\bigl(\vec X^{m+1},~\kappa^{m+1}\bigr)\in\bigl(\mathbb{X}^h,~\mathbb{K}^h\bigr)$ be the solution of \eqref{eqn:dis2d}, then the energy is decreasing during
the evolution, i.e.
\begin{equation}
\label{eqn:stability1}
W^{m+1}
\leq W^m\le W^0=W(\Gamma^0)=|\Gamma^0|-\sigma (x_r^0-x_l^0), \qquad m\ge0.
\end{equation}
Moreover, we have
\begin{equation}
\label{eqn:stability2}
\sum_{l=1}^{m+1}\left[\Bigl(\partial_s\kappa^{l},
~\partial_s\kappa^{l}\Bigr)_{\Gamma^{l-1}}   + \frac{1}{\eta} \left(\frac{x_l^{l}-x_l^{l-1}}{\tau}\right)^2+ \frac{1}{\eta}
\left(\frac{x_r^{l} - x_r^{l-1}}{\tau}\right)^2\right]\leq \frac{W^0-W^{m+1}}{\tau}, \quad m\ge0.
\end{equation}
\end{thm}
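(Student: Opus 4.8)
The plan is to follow the standard energy argument for Barrett--Garcke--Nürnberg type schemes: test the two equations of \eqref{eqn:dis2d} against a well-chosen pair, combine them so that the curvature--velocity coupling cancels, and then bound the resulting geometric term from below by the increment of the interfacial length. First I would take $\psi^h=\kappa^{m+1}\in\mathbb{K}^h$ in \eqref{eqn:dis2d1} and $\boldsymbol{\omega}^h=\vec X^{m+1}-\vec X^m\in\mathbb{X}^h$ in \eqref{eqn:dis2d2}. The crucial point of this choice is that $\omega_1^h(0)=x_l^{m+1}-x_l^m$ and $\omega_1^h(1)=x_r^{m+1}-x_r^m$, so the boundary mobility term in \eqref{eqn:dis2d2} becomes the manifestly nonnegative dissipation $\frac{1}{\eta\tau}\bigl[(x_l^{m+1}-x_l^m)^2+(x_r^{m+1}-x_r^m)^2\bigr]$. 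Multiplying \eqref{eqn:dis2d1} by $\tau$ and subtracting \eqref{eqn:dis2d2}, the shared mass-lumped coupling term $\bigl((\vec X^{m+1}-\vec X^m)\cdot\vec n^m,\,\kappa^{m+1}\bigr)_{\Gamma^m}^h$ cancels, leaving
\begin{equation}
\bigl(\partial_s\vec X^{m+1},\,\partial_s(\vec X^{m+1}-\vec X^m)\bigr)_{\Gamma^m}
-\sigma\bigl[(x_r^{m+1}-x_r^m)-(x_l^{m+1}-x_l^m)\bigr]
=-\tau\bigl(\partial_s\kappa^{m+1},\,\partial_s\kappa^{m+1}\bigr)_{\Gamma^m}
-\frac{(x_l^{m+1}-x_l^m)^2+(x_r^{m+1}-x_r^m)^2}{\eta\tau}.
\end{equation}

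The main obstacle, and the only genuinely geometric step, is to control the first term on the left by the length increment. I would establish the segment-wise identity and inequality
\begin{equation}
\bigl(\partial_s\vec X^{m+1},\,\partial_s(\vec X^{m+1}-\vec X^m)\bigr)_{\Gamma^m}
=\sum_{j=1}^N\frac{\vec h_j^{m+1}\cdot(\vec h_j^{m+1}-\vec h_j^m)}{|\vec h_j^m|}
\geq\sum_{j=1}^N\bigl(|\vec h_j^{m+1}|-|\vec h_j^m|\bigr)=|\Gamma^{m+1}|-|\Gamma^m|,
\end{equation}
using that on each $I_j$ the arclength derivative relative to $\Gamma^m$ satisfies $\partial_s\vec X^{m+1}=\vec h_j^{m+1}/|\vec h_j^m|$. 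Writing $p=|\vec h_j^{m+1}|$, $q=|\vec h_j^m|$ and $\vec h_j^{m+1}\cdot\vec h_j^m=pq\cos\theta_j$, the termwise inequality reduces to $p^2+q^2-pq(1+\cos\theta_j)\geq0$, which is immediate from $p^2+q^2\geq2pq$ and $\cos\theta_j\leq1$. This is the discrete counterpart of the convexity of the length functional that underlies unconditional stability.

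Finally, recalling the energy $W^m=|\Gamma^m|-\sigma(x_r^m-x_l^m)$ from \eqref{Wmm11} and observing that $(x_r^{m+1}-x_l^{m+1})-(x_r^m-x_l^m)=(x_r^{m+1}-x_r^m)-(x_l^{m+1}-x_l^m)$, the two displays combine into
\begin{equation}
W^{m+1}-W^m
\leq-\tau\bigl(\partial_s\kappa^{m+1},\,\partial_s\kappa^{m+1}\bigr)_{\Gamma^m}
-\frac{(x_l^{m+1}-x_l^m)^2+(x_r^{m+1}-x_r^m)^2}{\eta\tau}\leq0,
\end{equation}
which yields the monotonicity \eqref{eqn:stability1} by induction on $m$. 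Dividing this same relation by $\tau$ and summing the telescoping increments $W^{l-1}-W^l$ from $l=1$ to $m+1$ then gives \eqref{eqn:stability2}; no extra estimate is needed, since each summand is already bounded by $(W^{l-1}-W^l)/\tau$. I expect the length-increment inequality to be the crux, while the cancellation of the coupling term and the telescoping are routine.
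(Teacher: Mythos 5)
Your proof is correct and follows essentially the same route as the paper: test \eqref{eqn:dis2d1} with $\kappa^{m+1}$ and \eqref{eqn:dis2d2} with $\vec X^{m+1}-\vec X^m$, cancel the coupling term, bound the length increment, and telescope. The only difference is that you supply an elementary segment-wise proof of the key inequality $|\Gamma^{m+1}|-|\Gamma^m|\le\bigl(\partial_s\vec X^{m+1},\partial_s(\vec X^{m+1}-\vec X^m)\bigr)_{\Gamma^m}$ (your reduction to $p^2+q^2-pq(1+\cos\theta_j)\ge0$ is valid), whereas the paper simply cites this as a known lemma.
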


\begin{proof}
Choosing the test functions $\psi^h = \kappa^{m+1}$ and and $\boldsymbol{\omega}^h = \vec X^{m+1}-\vec X^m$  in \eqref{eqn:dis2d1} and
\eqref{eqn:dis2d2}, respectively,  and then subtracting the second one from the first one, we obtain
\begin{eqnarray}
&&\tau\Bigl(\partial_s\kappa^{m+1},~\partial_s\kappa^{m+1}\Bigr)_{\Gamma^m} + \Bigl(\partial_s\vec X^{m+1},~\partial_s(\vec X^{m+1}-\vec X^m)\Bigr)_{\Gamma^m} - \sigma\Bigl[x_r^{m+1}-x_l^{m+1} - (x_r^m - x_l^m)\Bigr]\nonumber\\
&&\qquad\qquad+\frac{1}{\eta\,\tau}\Bigl[(x_l^{m+1}-x_l^m)^2+(x_r^{m+1} - x_r^m)^2 \Bigr] =0.
\label{eqn:midstep}
\end{eqnarray}
Note that the following inequality holds (e.g., see Lemma 2.2 in \citet{Bansch05} or  Lemma 57 in \citet{Barrett20})
\begin{equation}\label{Gm1111}
|\Gamma^{m+1}|-|\Gamma^m|\le \Bigl(\partial_s\vec X^{m+1},~\partial_s(\vec X^{m+1}-\vec X^m)\Bigr)_{\Gamma^m}.
\end{equation}
Plugging \eqref{Gm1111} into \eqref{eqn:midstep} and noticing \eqref{Wmm11}, we get
\begin{eqnarray}\label{Gm2345}
W^{m+1}&=&|\Gamma^{m+1}| - \sigma(x_r^{m+1} - x_l^{m+1})\nonumber\\
&\le&W^{m+1}+ \tau\Bigl(\partial_s\kappa^{m+1},~\partial_s\kappa^{m+1}\Bigr)_{\Gamma^m} + \frac{1}{\eta\tau}\Bigl[ (x_l^{m+1}-x_l^m)^2+(x_r^{m+1} - x_r^m)^2 \Bigr]\nonumber\\
&\le&|\Gamma^m| - \sigma(x_r^m - x_l^m)=W^m,
\end{eqnarray}
which immediately implies the energy dissipation \eqref{eqn:stability1}.
Re-formulating \eqref{Gm2345} as
\begin{equation}\label{Gm3456}
\Bigl(\partial_s\kappa^{l},~\partial_s\kappa^{l}\Bigr)_{\Gamma^{l-1}} + \frac{1}{\eta} \left(\frac{x_l^{l}-x_l^{l-1}}{\tau}\right)^2+\frac{1}{\eta}\left(\frac{x_r^{l} - x_r^{l-1}}{\tau}\right)^2
\le \frac{W^{l-1}-W^{l}}{\tau},  \qquad 1\le l\le m+1,
\end{equation}
and summing \eqref{Gm3456} for $l=1,2,\ldots,m+1$, we immediately obtain
the inequality \eqref{eqn:stability2}.
\end{proof}

\subsection{Long-time behavior and equilibrium state}

Again, denote the mesh ratio indicator (MRI) $\Psi^m$ and the curvature variation indicator (CVI)
$D^m$ of the curve $\Gamma^m$ as
\begin{equation}
\Psi^m=\frac{\max_{1\le j\le N}\;\left|\vec X^m(\rho_j) - \vec X^m(\rho_{j-1})\right|}{\min_{1\le j\le N}\;\left|\vec X^m(\rho_j) - \vec X^m(\rho_{j-1})\right|}, \qquad
D^m=\Bigl(\partial_s\kappa^{m},~\partial_s\kappa^m\Bigr)_{{\Gamma^{m-1}}},
\qquad m\ge1.
\end{equation}

Similar to the proof in Section 3.4 (with the proof omitted here for brevity), we have

\begin{prop}
\label{prop:full1}
Let $\Bigl(\vec X^m(\cdot),~\kappa^m(\cdot)\Bigr)$ be the solution of the full-discretization by using the ES-PFEM \eqref{eqn:dis2d}  and assume
that $\min_{1\le j\le N}\;|\vec h_{j}^m|>0$ for $m=1,2,\ldots$ and when $m\to+\infty$,
$\vec X^m(\cdot)$ and $\kappa^m(\cdot)$ converge to the equilibrium $\Gamma^e=\vec X^e(\rho)=(x^e(\rho),y^e(\rho))^T\in \mathbb{X}^h$ and $\kappa^e(\rho)\in\mathbb{K}^h$, respectively, satisfying $\min_{1\le j\le N}\;|\vec h_{j}^e|>0$ with $\vec h_{j}^e:=\vec X^e(\rho_j) - \vec X^e(\rho_{j-1})$ for $1\le j\le N$. Then we have
\begin{eqnarray}\label{lb1345fd}
&&\lim_{m\to +\infty} D^m=D^e:=\Bigl(\partial_s\kappa^{e},~\partial_s\kappa^e\Bigr)_{{\Gamma^e}}=0,
\quad \Rightarrow \quad \kappa^e(\rho)\equiv\kappa^c,\quad 0\le \rho\le1,\\
&&\lim_{m\to +\infty} \Psi^m=\Psi^e:=\frac{\max_{1\le j\le N}\;|\vec h_{j}^e|}{\min_{1\le j\le N}\;|\vec h_{j}^e|}
=\frac{\max_{1\le j\le N}\;\left|\vec X^e(\rho_j) - \vec X^e(\rho_{j-1})\right|}{\min_{1\le j\le N}\;\left|\vec X^e(\rho_j) - \vec X^e(\rho_{j-1})\right|}=1,
\label{lb13456fd}
\end{eqnarray}
where $\kappa^c\ne 0$ is a constant. Furthermore, denote $\theta_e^l$ and $\theta_e^r$ as the left and right contact angles, respectively,  of $\Gamma^e$, then there exists $h_1>0$ sufficiently small such that
\begin{equation}\label{thetarl156fd}
|\cos(\theta_e^l) - \sigma|\le C_1h,\qquad |\cos(\theta_e^r) - \sigma| \le C_1h, \qquad 0<h\le h_1,
\end{equation}
where $C_1>0$ is a constant.
\end{prop}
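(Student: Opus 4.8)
The plan is to reduce the fully discrete statement to the already-established semi-discrete Proposition~\ref{prop:semi2} by first identifying the equilibrium system that the limit pair satisfies. First I would pass to the limit $m\to+\infty$ in the full-discretization \eqref{eqn:dis2d}. Under the stated assumptions $\vec X^m\to\vec X^e$, $\kappa^m\to\kappa^e$ and $\min_{1\le j\le N}|\vec h_j^e|>0$, all the inner products and mass-lumped forms in \eqref{eqn:dis2d1}--\eqref{eqn:dis2d2} converge to their counterparts built on $\Gamma^e$; the decisive point is that the backward-Euler increments $\vec X^{m+1}-\vec X^m$, $x_l^{m+1}-x_l^m$ and $x_r^{m+1}-x_r^m$ all tend to zero, so the terms carrying the factor $1/(\eta\tau)$ drop out in the limit. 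Consequently $(\vec X^e,\kappa^e)$ satisfies precisely the equilibrium variational problem \eqref{eqn:equi}, which is identical to the system already analysed in the semi-discrete case. The limits $\lim_{m\to+\infty}D^m=D^e$ and $\lim_{m\to+\infty}\Psi^m=\Psi^e$ then follow from the continuity of the functionals $D(\cdot)$ and $\Psi(\cdot)$ under this convergence, again using $\min_j|\vec h_j^e|>0$.

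With \eqref{eqn:equi} in hand, the three conclusions are obtained exactly as in the proof of Proposition~\ref{prop:semi2}. Choosing $\psi^h=\kappa^e$ in \eqref{eqn:equi1} and using $\kappa^e\in\mathbb{K}^h\subset C([0,1])$ gives $D^e=(\partial_s\kappa^e,\partial_s\kappa^e)_{\Gamma^e}=0$, hence $\kappa^e\equiv\kappa^c$. The equidistribution identity \eqref{eqn:semiequalmeeqm} is re-derived from \eqref{eqn:equi2} as in Proposition~\ref{prop:semiequalmesh}; combined with the contradiction argument \eqref{mc876}, which once $\kappa^c\neq0$ forces adjacent unit tangents to differ, the second factor in \eqref{eqn:semiequalmeeqm} is nonzero, so $|\vec h_1^e|=\cdots=|\vec h_N^e|$, giving $\Psi^e=1$ and $|\vec h_j^e|=L^eh$. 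Finally, inserting the endpoint test function $\boldsymbol{\omega}^h$ with $\boldsymbol{\omega}^h(\rho_0)=(1,0)^T$ and $\boldsymbol{\omega}^h(\rho_k)=\vec 0$ for $1\le k\le N$ into \eqref{eqn:equi2}, and using $\kappa^e\equiv\kappa^c$ together with $|\vec h_1^e|=L^eh$, produces $\cos\theta_e^l-\sigma=-\tfrac12\kappa^cL^e(\vec n_1^e\cdot(1,0)^T)\,h$, whence $|\cos\theta_e^l-\sigma|\le C_1h$; the right endpoint is treated symmetrically.

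The main obstacle is to show $\kappa^c\neq0$, equivalently $y^e\not\equiv0$. In Proposition~\ref{prop:semi2} this used the exact area conservation \eqref{massconsd} to guarantee $A^e=A^h(0)>0$, but the full discretization does not conserve area exactly: taking $\psi^h=1$ in \eqref{eqn:dis2d1} only yields $((\vec X^{m+1}-\vec X^m)\cdot\vec n^m,1)^h_{\Gamma^m}=0$, a linearized balance that degenerates to $0=0$ at equilibrium and gives no control on $A^e$. I would therefore replace the mass argument by a direct one exploiting $\sigma=\cos\theta_i\in(-1,1)$: if $y^e\equiv0$ then $\Gamma^e$ lies flat on the substrate with $\partial_sx^e\equiv1$ along the curve, and testing \eqref{eqn:equi2} with $\boldsymbol{\omega}^h=(\omega_1^h,0)^T$ collapses the identity to $(\sigma-1)\,[\omega_1^h(1)-\omega_1^h(0)]=0$ for all $\omega_1^h\in\mathbb{K}^h$, which is impossible since $\sigma\neq1$. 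This contradiction gives $y^e\not\equiv0$, hence $(\partial_sy^e,\partial_sy^e)_{\Gamma^e}>0$, and then the choice $\boldsymbol{\omega}^h=(0,y^e)^T$ in \eqref{eqn:equi2}, exactly as in \eqref{kappc986}--\eqref{kappc876}, delivers $\kappa^c\neq0$, after which the argument of the preceding paragraph completes the proof.
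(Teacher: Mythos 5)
Your proposal is correct and follows the route the paper intends: the paper omits this proof entirely, stating only that it is ``similar to the proof in Section 3.4,'' and your reduction --- pass to the limit $m\to+\infty$ in \eqref{eqn:dis2d}, observe that the backward-Euler increments and hence the $1/(\eta\tau)$ terms vanish, land on the equilibrium system \eqref{eqn:equi}, and then rerun the argument of Proposition~\ref{prop:semi2} --- is exactly that. The genuinely valuable part of your write-up is that you noticed the one step that does \emph{not} transfer: the semi-discrete proof establishes $\kappa^c\ne0$ via $y^e\not\equiv0$, which it gets from the exact area identity $A^e=A^h(0)>0$ in \eqref{massconsd}; the full discretization only satisfies the linearized balance $\bigl((\vec X^{m+1}-\vec X^m)\cdot\vec n^m,1\bigr)^h_{\Gamma^m}=0$, which is vacuous at equilibrium (and the paper's own numerics in Fig.~10 confirm the discrete area is not conserved). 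Your replacement --- assume $y^e\equiv0$ and derive $\sigma=\pm1$ from \eqref{eqn:equi2}, contradicting $\sigma\in(-1,1)$ --- is a sound and arguably cleaner fix, since it needs no information about the history of the iteration. One small imprecision: if $y^e\equiv0$ the tangents satisfy $\partial_sx^e=\pm1$ segmentwise, not necessarily $\equiv1$ (the polygon could fold back), so the clean collapse to $(\sigma-1)[\omega_1^h(1)-\omega_1^h(0)]=0$ is not automatic; testing instead with the hat function $\boldsymbol{\omega}^h=(\omega_1^h,0)^T$, $\omega_1^h(\rho_0)=1$, $\omega_1^h(\rho_k)=0$ for $k\ge1$, yields $\pm1-\sigma=0$ directly and closes the argument in either orientation.
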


%
%

\begin{remark}
Similar to the semi-discretization \eqref{eqn:2dsemi}, the equilibrium solution of the full-discretization \eqref{eqn:dis2d} also has the following properties: (i) it has constant curvature; (ii) it has equal mesh
distribution and the MRI $\Psi^m\approx 1$ when $m\gg1$;
and  (iii) its contact angles converge  to
the Young's contact angle $\theta_i$ (i.e., theoretical equilibrium contact angle) linearly (or at first-order) with respect to the mesh size $h$.
\end{remark}

\section{Numerical results}

In this section, we first introduce a new manifold distance between two curves and adopt it to compare convergence rates of the proposed ES-PFEM \eqref{eqn:dis2d} and the PFEM proposed in \citet{Bao17} for solid-state dewetting problems. Then we report the temporal evolution of the area, contact angles, total free energy and some other indicators by using the proposed ES-PFEM under different computational parameters. Subsequently, we discuss some properties of the numerical equilibria, i.e., some convergence results about the total area loss and numerical contact angles. Finally, we show some applications of the ES-PFEM to simulating island film evolution.

We remark that the contact line mobility $\eta$ precisely controls the relaxation rate of the dynamic contact angle $\theta_d$ to the Young's contact angle $\theta_i = \arccos\sigma \in (0,\pi)$, where $\sigma$ is the material constant, and the larger $\eta$ will accelerate the relaxation process (for more details, see~\citet{Wang15}). In the following numerical simulations, we always choose $\eta=100$ for simplicity. We take the following four different types of initial shapes with $C^0$, $C^1$, $C^{\infty}$-smooth and non-convex curves, respectively:
\begin{itemize}
\item Shape 1 ($C^0$-smooth): a $6\times 1$ rectangle;
\item Shape 2 ($C^1$-smooth): a $4\times 1$ rectangle together with two quarter of circles on its left and right sides;
\item Shape 3 ($C^\infty$-smooth): half an ellipse with semi-major axis $a_x=4$ and semi-minor axis $a_y=1$;
\item Shape 4 (non-convex): a polar curve given as $r(\theta) = 2 + \cos(6\theta)$, with $\theta\in[0,~\pi]$.
\end{itemize}

\subsection{A manifold distance between two curves}

\begin{figure}[!htp]
\centering
\includegraphics[width=0.8\textwidth]{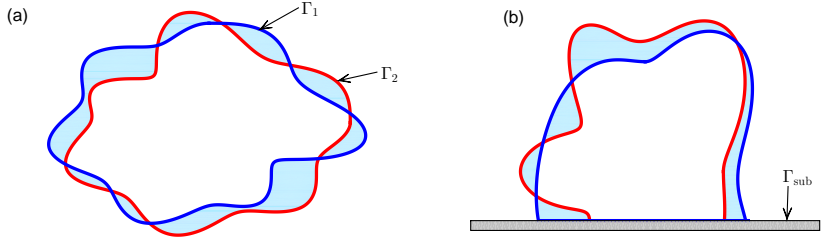}
\caption{An illustration of the manifold distance $M(\Gamma_1,\Gamma_2)$ between two curves $\Gamma_1$ and $\Gamma_2$ (colored in red and blue, respectively): (a) two closed curves, (b) two open curves on a flat substrate $\Gamma_{\rm sub}$, where ${\rm M}(\Gamma_1,\Gamma_2)$ is defined as the area of the symmetric difference region between two enclosed domains, i.e., the regions shaded in sky-blue.}
\label{fig:distance}
\end{figure}

As we know, it is a difficult and challenging problem on how to measure the difference between two curves in 2D or two surfaces in 3D. Some measures were used in the literature~~\citep[e.g.,][]{Bao17} by using the distance between the points on one curve (or surface) to the other curve (or surface). Inspired by the geometric quantity used for studying the quantitative stability for the Wulff shape of a given surface tension energy~\citep[see][]{Figalli10,Figalli17}, here we introduce a manifold distance between two curves. As shown in Fig.~\ref{fig:distance},
let $\Gamma_1$ and $\Gamma_2$ be two closed curves (cf. Fig.~\ref{fig:distance}(a)) or
two open curves on a flat substrate $\Gamma_{\rm sub}$ (cf. Fig.~\ref{fig:distance}(b)),
and we denote $\Omega_1$ and $\Omega_2$ as the inner regions enclosed by $\Gamma_1$ and $\Gamma_2$, respectively.
Then we introduce the manifold distance to measure the difference
between the two curves $\Gamma_1$ and $\Gamma_2$ by the area of the symmetric difference region between $\Omega_1$ and $\Omega_2$ (shown in the sky-blue region in Fig.~\ref{fig:distance})
\begin{equation}\label{MG1G2}
{\rm M}(\Gamma_1,~\Gamma_2):= |\left(\Omega_1\backslash \Omega_2 \right)\cup\left( \Omega_2\backslash\Omega_1\right)|=|\Omega_1|+|\Omega_2|-2|\Omega_1\cap\Omega_2|,
\end{equation}
where $|\Omega|$ denotes the area of the region $\Omega$.
We remark here that the definition in \eqref{MG1G2} can be very easily extended to the case of two open curves on a curved substrate in 2D and
the case of two closed surfaces or two open surfaces on a flat/curved substrate in 3D.

\medskip

For the manifold distance \eqref{MG1G2}, it is straightforward to show that:

\begin{prop}
\label{prop:mdd}
Let $\Gamma_1$, $\Gamma_2$ and $\Gamma_3$ be three closed curves or
open curves on a flat substrate. Then the manifold distance \eqref{MG1G2}
has the following properties:
\begin{itemize}
\item[(i)] (Symmetry). ${\rm M}(\Gamma_1,~\Gamma_2)={\rm M}(\Gamma_2,~\Gamma_1)$;
\item[(ii)] (Positivity).  ${\rm M}(\Gamma_1,~\Gamma_2)\ge0$, and ${\rm M}(\Gamma_1,~\Gamma_2)=0$ if and only if $\Gamma_1=\Gamma_2$;
\item[(iii)] (Triangle Inequality).  ${\rm M}(\Gamma_1,~\Gamma_2)\le {\rm M}(\Gamma_1,~\Gamma_3)+{\rm M}(\Gamma_2,~\Gamma_3)$.
\end{itemize}
\end{prop}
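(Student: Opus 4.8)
The plan is to recognize $M(\Gamma_1,\Gamma_2)$ as the measure of the symmetric difference of the enclosed regions and to reduce each of the three claimed properties to an elementary fact about set measure. Writing $\Omega_i\,\triangle\,\Omega_j:=(\Omega_i\setminus\Omega_j)\cup(\Omega_j\setminus\Omega_i)$ for the symmetric difference, the definition \eqref{MG1G2} simply reads $M(\Gamma_i,\Gamma_j)=|\Omega_i\,\triangle\,\Omega_j|$. With this identification, symmetry (i) is immediate: the symmetric difference operation is itself symmetric, $\Omega_1\,\triangle\,\Omega_2=\Omega_2\,\triangle\,\Omega_1$, and equivalently the right-hand expression $|\Omega_1|+|\Omega_2|-2|\Omega_1\cap\Omega_2|$ in \eqref{MG1G2} is visibly symmetric in its two arguments.

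For positivity (ii) I would first note that $M(\Gamma_1,\Gamma_2)=|\Omega_1\,\triangle\,\Omega_2|\ge0$ since area is nonnegative, and that $\Gamma_1=\Gamma_2$ forces $\Omega_1=\Omega_2$, making the symmetric difference empty and hence $M=0$. The converse is the only mildly delicate point: $M(\Gamma_1,\Gamma_2)=0$ gives $|\Omega_1\,\triangle\,\Omega_2|=0$, so $\Omega_1=\Omega_2$ as regions, and passing to boundaries yields $\Gamma_1=\partial\Omega_1=\partial\Omega_2=\Gamma_2$ (the substrate portion of the boundary being common in the open-curve case, so that equality of the regions pins down the curves exactly).

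The only property requiring an actual argument is the triangle inequality (iii), and the key step I would use is the set-theoretic inclusion
\begin{equation}
\Omega_1\,\triangle\,\Omega_2\;\subseteq\;\bigl(\Omega_1\,\triangle\,\Omega_3\bigr)\cup\bigl(\Omega_3\,\triangle\,\Omega_2\bigr).
\end{equation}
This I would verify by a short case analysis on a point $x\in\Omega_1\,\triangle\,\Omega_2$: if, say, $x\in\Omega_1\setminus\Omega_2$, then either $x\notin\Omega_3$, placing $x$ in $\Omega_1\,\triangle\,\Omega_3$, or $x\in\Omega_3$, placing $x$ in $\Omega_3\,\triangle\,\Omega_2$; the case $x\in\Omega_2\setminus\Omega_1$ is symmetric. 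Taking areas and using subadditivity of the measure then gives $M(\Gamma_1,\Gamma_2)\le M(\Gamma_1,\Gamma_3)+M(\Gamma_3,\Gamma_2)$. Alternatively, one may identify $M(\Gamma_i,\Gamma_j)=\|\chi_{\Omega_i}-\chi_{\Omega_j}\|_{L^1}$ with the $L^1$ distance of the indicator functions and simply invoke the triangle inequality for the $L^1$ norm. The hard part---such as it is---is minimal: the whole content is that the symmetric-difference measure is a pseudometric that degenerates only on coincident regions, so there is no genuine obstacle, and the only care needed is in the positivity statement, namely passing from equality of areas back to equality of the bounding curves.
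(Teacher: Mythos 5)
Your proposal is correct and follows essentially the same route as the paper: the authors also prove the triangle inequality by establishing the inclusion $\Omega_1\,\triangle\,\Omega_2\subseteq(\Omega_1\,\triangle\,\Omega_3)\cup(\Omega_3\,\triangle\,\Omega_2)$ via the same pointwise case analysis and then applying subadditivity of the area, while dismissing (i) and (ii) as immediate from the definition. If anything, your write-up is slightly more complete, since you spell out the passage from $|\Omega_1\,\triangle\,\Omega_2|=0$ back to $\Gamma_1=\Gamma_2$ in the positivity claim, which the paper leaves implicit.
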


\begin{proof}
The properties (i)-(ii) are obvious by using the definition of the manifold distance.
In order to prove the property (iii), we first denote $\Omega_{A} = \left(\Omega_1\backslash \Omega_2 \right)\cup\left( \Omega_2\backslash\Omega_1\right)$, $\Omega_B=\left(\Omega_1\backslash \Omega_3 \right)\cup\left( \Omega_3\backslash\Omega_1\right)$ and $\Omega_C=\left(\Omega_2\backslash \Omega_3 \right)\cup\left( \Omega_3\backslash\Omega_2\right)$.
Then, we will show that $\Omega_A\subset \left(\Omega_B\cup\Omega_C\right)$.
For any given point $\vec x\in\Omega_A$, we have either $\vec x\in\Omega_1\backslash\Omega_2$ or $\vec x\in\Omega_2\backslash\Omega_1$.
Furthermore, by introducing another set $\Omega_3$, we only have the following four different cases:
\begin{itemize}
\item if $\vec x\in\Omega_1\backslash\Omega_2$ and $\vec x \in \Omega_3$, we have $\vec x \in\left(\Omega_3\backslash\Omega_2\right)$;
\item if $\vec x\in\Omega_1\backslash\Omega_2$ and $\vec x \not\in \Omega_3$, we have $\vec x \in\left(\Omega_1\backslash\Omega_3\right)$;
\item if $\vec x\in\Omega_2\backslash\Omega_1$ and $\vec x \in \Omega_3$, we have $\vec x \in\left(\Omega_3\backslash\Omega_1\right)$;
\item if $\vec x\in\Omega_2\backslash\Omega_1$ and $\vec x \not\in \Omega_3$, we have $\vec x \in\left(\Omega_2\backslash\Omega_3\right)$.
\end{itemize}
Therefore, by noticing the definitions of the sets $\Omega_B$ and $\Omega_C$, we immediately have proved that $\Omega_A\subset \left(\Omega_B\cup\Omega_C\right)$, and
$|\Omega_A|\leq|\Omega_B\cup\Omega_C|\leq |\Omega_B|+|\Omega_C|$.
\end{proof}

We remark that, to measure the convergence of numerical solutions in this paper, we only need to compute the manifold distance between polygonal curves in practice, because we use the piecewise linear curve to approximate the true curve solution. Since the polygonal curves consist of ordered line segments, we can easily determine the intersection points between two different polygonal curves, then calculate the enclosed area as shown in Fig.~\ref{fig:distance}.

\subsection{Convergent rates}

Let $\Gamma^m$ be the polygonal curve as an approximation of the
exact solution of the curve  $\Gamma(t=t_m)$, which is
obtained numerically at time $t=t_m$ under the mesh size $h$ by a
numerical method. In order to test the spatial accuracy of the proposed ES-PFEM~\eqref{eqn:dis2d}, we define the numerical error based on the manifold distance as
\begin{equation}
e_h(t_m):= {\rm M}(\Gamma^m,~\Gamma(t=t_m)),\qquad m=0,1,\cdots
\end{equation}

To compare the numerical convergence rates under the defined numerical errors $e_h(t_m)$ by using the ES-PFEM and the PFEM proposed in~\citet{Bao17}, we use the same computational set-ups for the two methods. The initial shape of the island film is chosen as the Shape '2' defined above, and $\sigma = \cos(5\pi/6)$. Here since we only focus on the spatial accuracy of the schemes, we
choose a very small time step $\tau = 1\times 10^{-6}$ for all cases in order to make the temporal error negligible compared to the spatial error. The `exact' solution of the curve $\Gamma(t)$ is obtained numerically under a very fine mesh size (we choose here $h=2^{-13}$) with the same time step $\tau$.

\begin{figure}[!htp]
\centering
\includegraphics[width=0.99\textwidth]{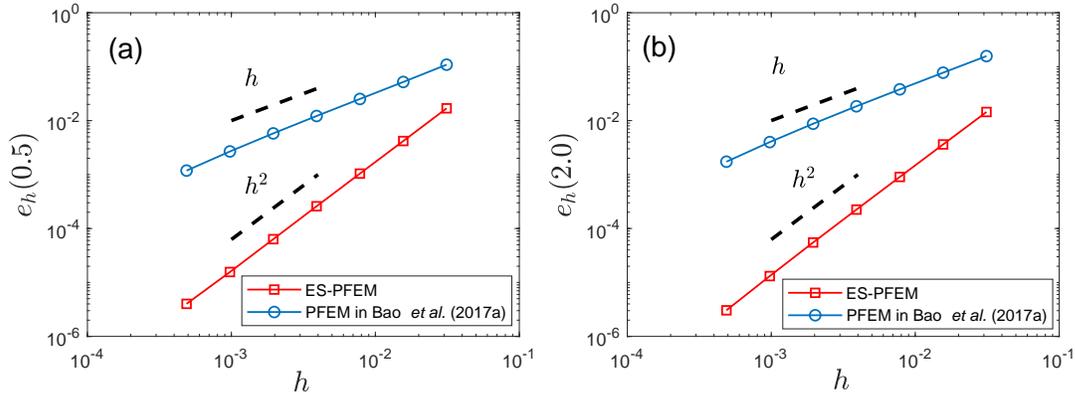}
\caption{Comparisons between the numerical convergence rates by using the proposed ES-PFEM
and the PFEM proposed by~\citet{Bao17} at two different times: (a) $t=0.5$, (b) $t=2.0$, where the material constant $\sigma=\cos(5\pi/6)$.}
\label{fig:order1}
\end{figure}

Fig.~\ref{fig:order1} depicts the log-log plots of the numerical errors $e_h(t_m)$ versus the mesh size $h$ by using the proposed ES-PFEM and the PFEM proposed by~\citet{Bao17} at time $t=0.5$ and $t=2.0$. We can clearly observe the numerical convergence rates under the manifold distance error for the two PFEMs. From these numerical results, we can draw the following conclusions: (i) the numerical errors for our proposed ES-PFEM under the same computational parameters can be significantly reduced compared to those obtained from the PFEM in~\citet{Bao17}; (ii) the convergence rate for the ES-PFEM can reach the second-order with respect to mesh size $h$, while for the PFEM in~\citet{Bao17}, the convergence rate is only first-order.
\begin{figure}[!htp]
\centering
\includegraphics[width=0.99\textwidth]{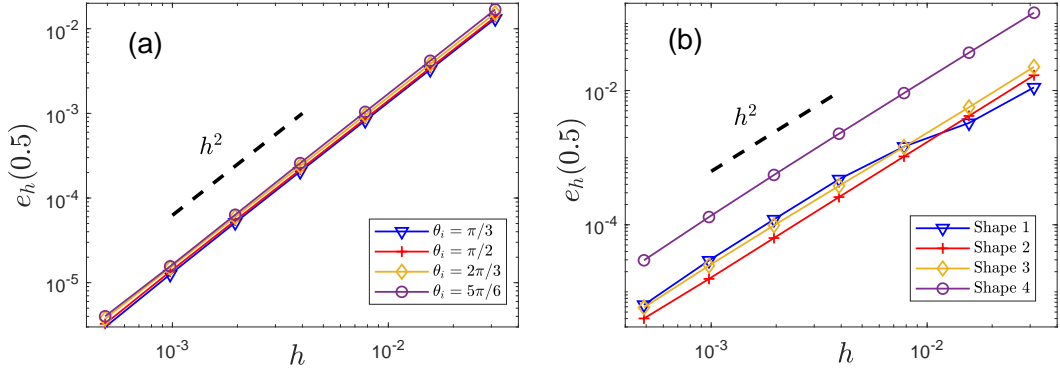}
\caption{The numerical errors as a function of mesh size $h$ by using the proposed ES-PFEM at time
$t=0.5$ (a) under four different Young's contact angles $\theta_i$, where the initial shape is chosen as the Shape `2'; (b) under four different initial shapes, where $\theta_i = 5\pi/6$.}
\label{fig:order2}
\end{figure}

Furthermore, to demonstrate that the second-order convergence rate in the sense of the manifold
distance error is independent of the initial set-ups, we perform ample numerical simulations for different Young's contact angles $\theta_i$ and different types of initial shapes.
The numerical errors of the ES-PFEM under different Young's contact angles $\theta_i$ and these different initial shapes at time $t=0.5$ are displayed in Fig.~\ref{fig:order2}(a),(b), respectively.
From the figure, we can clearly observe that the numerical errors as a function of mesh size $h$ behave almost the same for all the cases, regardless of different Young's contact angels $\theta_i$ and different initial shapes. The numerical convergence rates by using the proposed ES-PFEM are all second-order, which demonstrates the numerical convergence rate under the manifold distance error is very robust.

\subsection{Time evolution of area, contact angles, energy and indicators}

In the following, we present some numerical results about the temporal evolution of the total area of the film and the dynamical contact angle by using the proposed ES-PFEM. As we know, the exact solution of the sharp-interface model satisfies the area/mass conservation, and that the dynamical contact angle $\theta_d$ will converge to the Young's contact angle $\theta_i$. Here, we would like to investigate how these quantities numerically converge to their theoretical values as we refine the mesh size $h$. First, we define the relative area loss as
\begin{equation}
\Delta A^h(t):=\frac{A^h(t) - A^h(0)}{A^h(0)},\qquad t\geq 0,
\end{equation}
where $A^h(0)$ is the total area of the initial shape. As shown in Fig.~\ref{fig:fig6}(a), the area loss mainly happens at the beginning of the evolution, then almost keeps a value until approaching its equilibrium state. This value of the area loss can be significantly decreased when we refine the mesh size. Moreover, from Fig.~\ref{fig:fig6}(b), we also observe the numerical convergence between the dynamic contact angle and Young's angle $\theta_i$ in the long time when we refine the mesh size from $h=1/64$ to $h=1/512$.

\begin{figure}[!htp]
\centering
\includegraphics[width=0.99\textwidth]{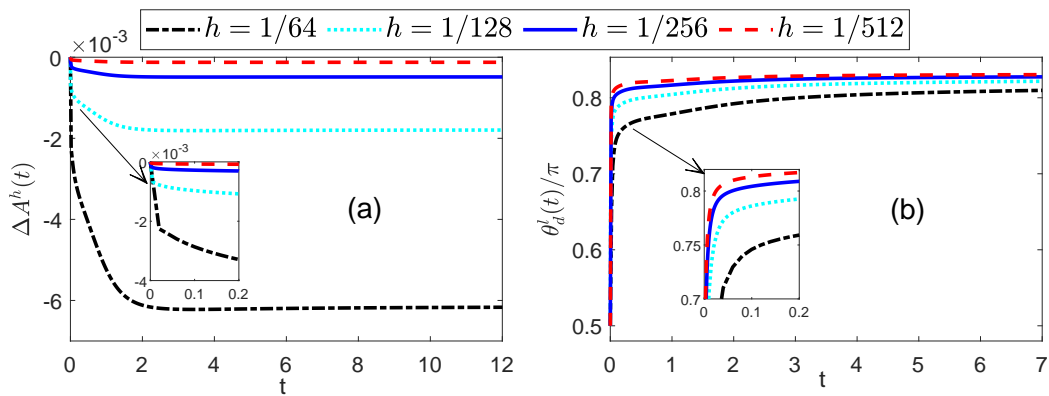}
\caption{(a) The temporal evolution of the relative area loss $\Delta A^h(t)$ under four different mesh sizes; (b) the temporal evolution of the left dynamic contact angle $\theta_d^l(t)$ converging to the Young's angle $\theta_i$ under four different mesh sizes. The initial shape is chosen as the Shape `2', and the Young's angle is chosen as $\theta_i=5\pi/6$, and $\tau = \frac{2048}{25}h^2$.}
\label{fig:fig6}
\end{figure}

\begin{figure}[!htp]
\centering
\includegraphics[width=0.99\textwidth]{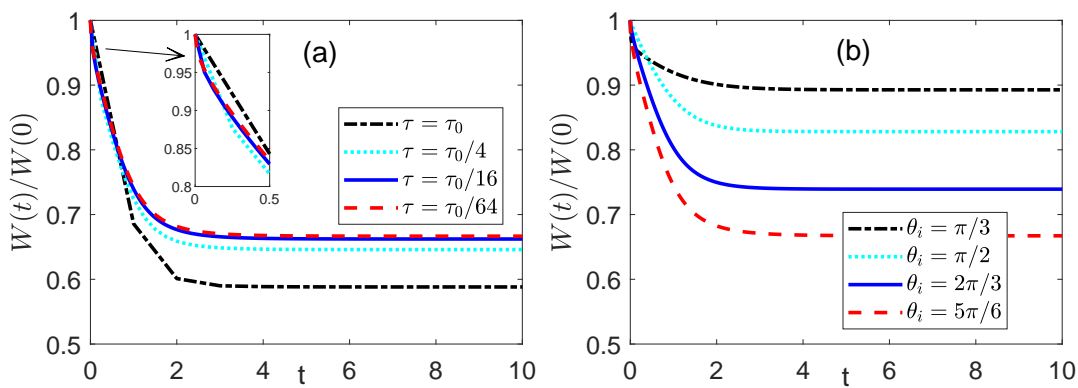}
\caption{The temporal evolution of the normalized total free energy (a) under different time steps $\tau$, where $\tau_0=1$, and $h = 1/128$ and $\theta_i=5\pi/6$; (b) under different Young's angles $\theta_i$, where $h=1/128$ and $\tau=0.01$. The initial shape is chosen as the Shape `2'.}
\label{fig:energy}
\end{figure}

Then, we numerically investigate the temporal evolution of the total free energy produced by
the ES-PFEM. As shown in Fig.~\ref{fig:energy}, we clearly observe that the normalized total free energy $W(t)/W(0)$ is always decreasing during the evolution, no matter how the time step and Young's angles are chosen. We note that although we choose a very large time step (e.g., $\tau_0=1$), the numerical solution still satisfies the energy dissipation very well. This
adequately reflects that the ES-PFEM is unconditionally energy-stable from the practical simulations.

\begin{figure}[!htp]
\centering
\includegraphics[width=0.99\textwidth]{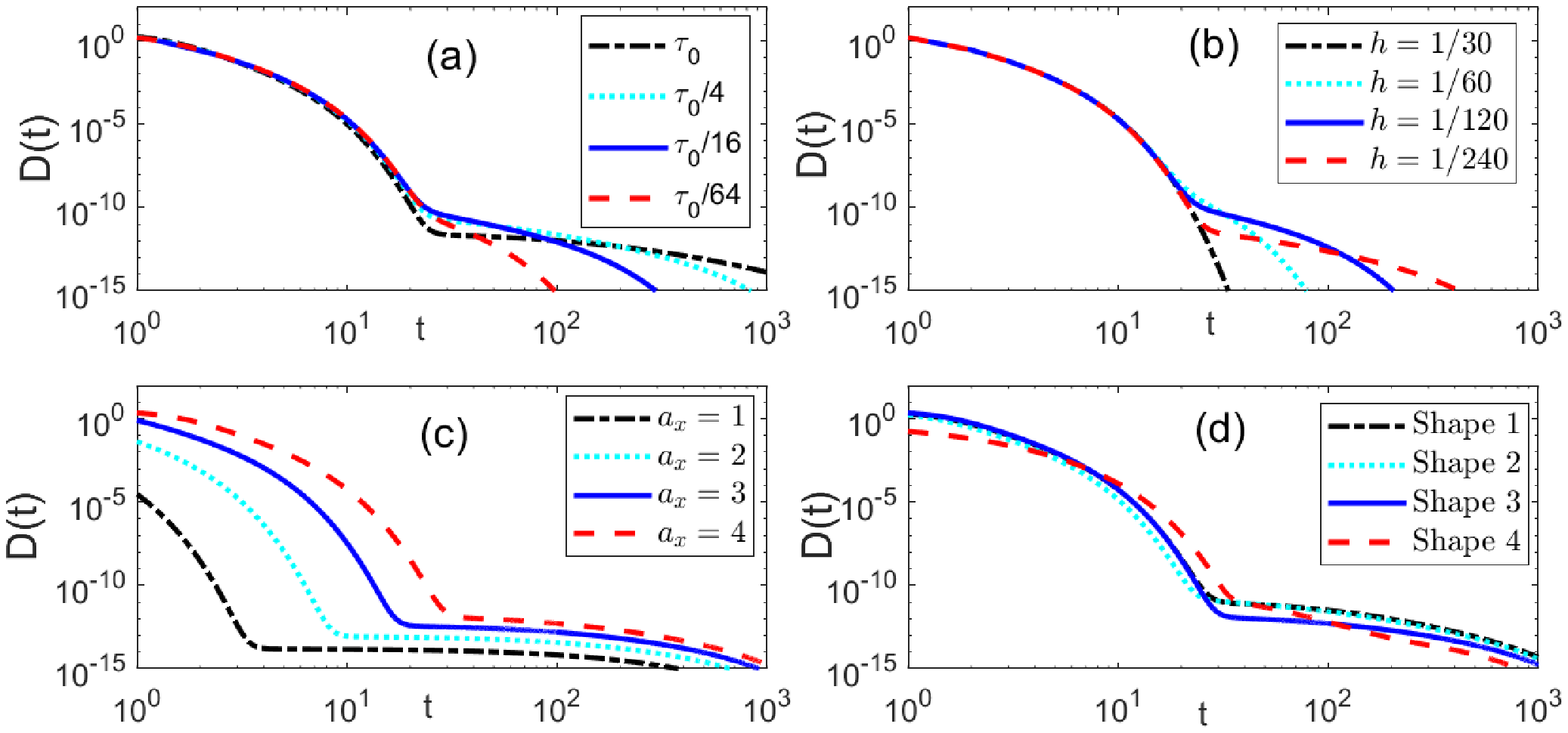}
\caption{The temporal evolution of the CVI function $D(t)$: (a) under different time steps, where $\tau_0=0.2$, $h=1/128$ and $\theta_i=5\pi/6$; (b) under different mesh sizes, where $\tau=0.01$ and $\theta_i=5\pi/6$; (c) under initial shapes of half an ellipse with different
semi-major axes $a_x$, where the semi-minor axis is fixed as $a_y = 1$, $\tau=0.1$, $h=1/128$ and $\theta_i=5\pi/6$; (d) under the Shapes `1-4', where $\tau=0.1$, $h=1/128$ and $\theta_i=5\pi/6$.
For (a)-(b), the initial shapes are both chosen as the Shape `2'.}
\label{fig:Indicator}
\end{figure}

\begin{figure}[!htp]
\centering
\includegraphics[width=0.99\textwidth]{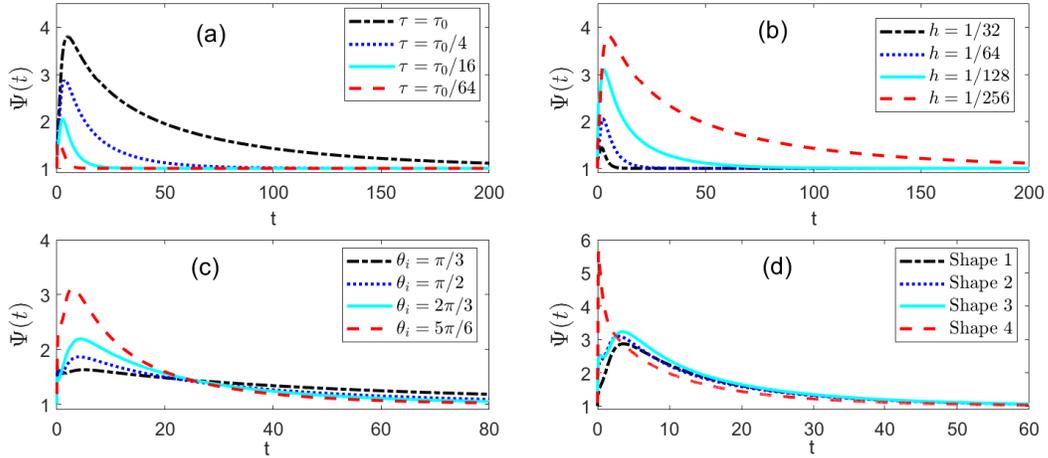}
\caption{The temporal evolution of the MRI function $\Psi(t)$:
(a) under different time steps with $\tau_0=2\times 10^{-2}$, $h=1/128$ and $\theta_i=5\pi/6$;
(b) under different mesh sizes with $\tau=5\times 10^{-3}$ and $\theta_i=5\pi/6$;
(c) under different Young's angles $\theta_i$, where $\tau=5\times 10^{-3}$, $h=1/128$;
(d) under the Shapes `1-4' with $\tau=5\times 10^{-3}$, $h=1/128$ and $\theta_i=5\pi/6$;
For (a)-(c), the initial shapes are all chosen as the Shape `1'.}
\label{fig:psii}
\end{figure}

Furthermore, we investigate the temporal evolution of the curvature variation indicator (CVI) function $D(t)$ by using ES-PFEM under different computational parameters and initial shapes. As shown in Fig.~\ref{fig:Indicator}, we observe that the indicator function $D(t)$ will eventually decrease as close to zero as possible up to the machine precision, no matter how the time steps, mesh sizes and initial shapes are chosen. When the indicator function $D(t)$ decreases to zero, it implies that the evolution curve attains the equilibrium shape.

Next, we perform ample numerical simulations to investigate the temporal evolution of the mesh ratio indicator (MRI) function $\Psi(t)$ by choosing different time steps, mesh sizes, Young's angles, and initial shapes. As clearly shown in Fig.~\ref{fig:psii}, no matter how the computational parameters are chosen, we can observe that at the beginning the function $\Psi(t)$ quickly increases to a critical value (which is no more than
$6$), then gradually decreases in a long time, finally converges to $1$, which implies that the proposed ES-PFEM could make mesh points equally distribute along the curve in the long time limit. Numerical results also indicate that the value of the upper bound for $\Psi(t)$ could be decreased by choosing smaller time steps and large mesh sizes (shown in Fig.~\ref{fig:psii}(a)-(b)).

\subsection{Some properties in computing equilibria}

We report some convergence results about the proposed ES-PFEM when applying it to computing the
equilibria of solid-state dewetting problems. First, we define that equilibrium state has reached, if $\frac{W^{m}-W^{m+1}}{\tau}\leq\epsilon$ satisfies, where $\epsilon$ is a small parameter and we choose $\epsilon=10^{-8}$ here. The numerical convergence rate of the total area and the contact angle for the equilibrium state as a function of mesh size $h$ are depicted in Fig~\ref{fig:Fig5c}. From the figure, we can clearly observe that: (i) the convergence rate of the area loss in the equilibrium state is second-order; (ii) the convergence rate of the error between the numerical equilibrium contact angle and theoretical equilibrium contact angle (i.e., Young's angle) is first-order.

\begin{figure}[!htp]
\centering
\includegraphics[width=0.9\textwidth]{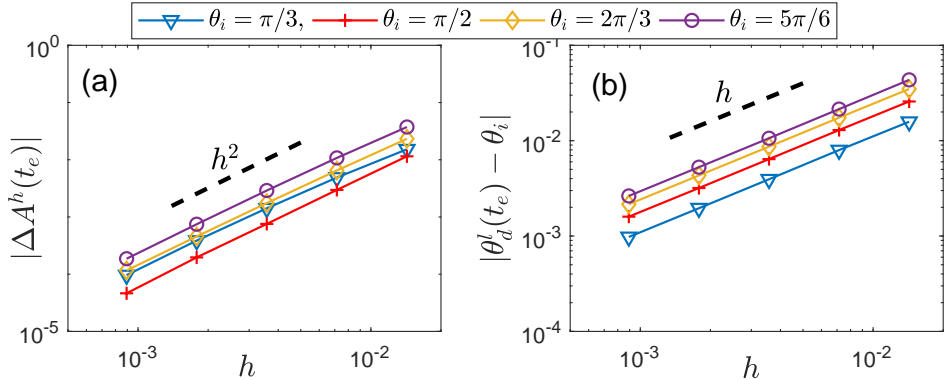}
\caption{(a) The area loss between the numerical equilibrium shapes and the theoretical equilibrium shape as a function of mesh size; (b) the numerical errors between the numerical equilibrium contact angle $\theta_d^l(t_e)$ and the Young's angle $\theta_i$ as a function of mesh size, where $t_e$ represents the arriving time when the equilibrium state first reaches, and the time step is chosen as $\tau = \frac{2048}{25}h^2$.}
\label{fig:Fig5c}
\end{figure}

We have numerically demonstrated that the convergence rate of the ES-PFEM under the manifold distance error is second-order. The manifold distance error is defined as the area of symmetric difference region, and it has some
natural relations with the area loss between the numerical solution and exact solution. So,
it is expected that the convergence rate of the area loss in the equilibrium state is also second-order, and the above numerical results have verified this expectation. Meanwhile, in the equilibrium state, as stated in Proposition~\ref{prop:semi2}, our numerical results also have demonstrated that the convergence rate of the error as a function of mesh size $h$ between the numerical equilibrium contact angle and Young's angle is first-order, i.e., $|\theta_e-\theta_i|=\mathcal{O}(h)$, where $\theta_e$ represents the numerical equilibrium contact angle.

\subsection{Applications to island evolution}
\begin{figure}[!htp]
\centering
\includegraphics[width=0.99\textwidth]{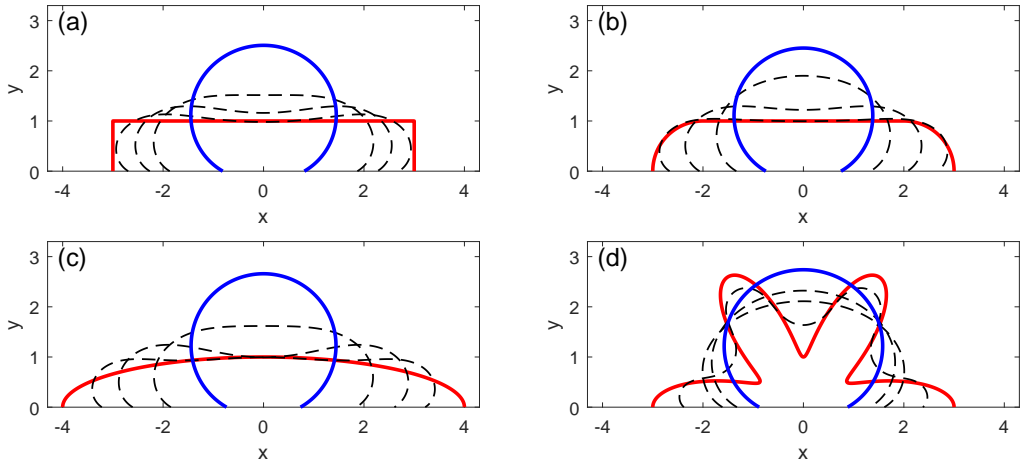}
\caption{ Several steps in the evolution of small islands (shown in red line) towards the equilibrium shape (shown in blue) under different initial shapes, where the initial shapes are respectively chosen as: (a) Shape `1'; (b) Shape '2'; (c) Shape '3'; (d) Shape '4'. The computational parameters are chosen as: $\theta_i=5\pi/6$, $h=1/560$, $\tau = 3.125\times 10^{-4}$.}
\label{fig:fig7}
\end{figure}

Finally, we present some numerical simulations about morphology evolution of an island film towards its equilibrium shape during solid-state dewetting. As shown in Fig.~\ref{fig:fig7},
we choose four different initial shapes (red solid line), and depict several evolution snapshots
(black dashed lines) until they arrive at the equilibrium shapes (blue solid line). From the figure, we can clearly observe that these curves eventually evolve into a perfect circular arc which intersects with the substrate at the same contact angle $\theta_i=5\pi/6$, no matter how the initial shapes are chosen. Then, by choosing different Young's angle $\theta_i$, as shown in Fig.~\ref{fig:figtheta}, we clearly observe that the equilibrium shape is still a circular arc, but its equilibrium contact angle changes according to different $\theta_i$.

\begin{figure}[!htp]
\centering
\includegraphics[width=0.99\textwidth]{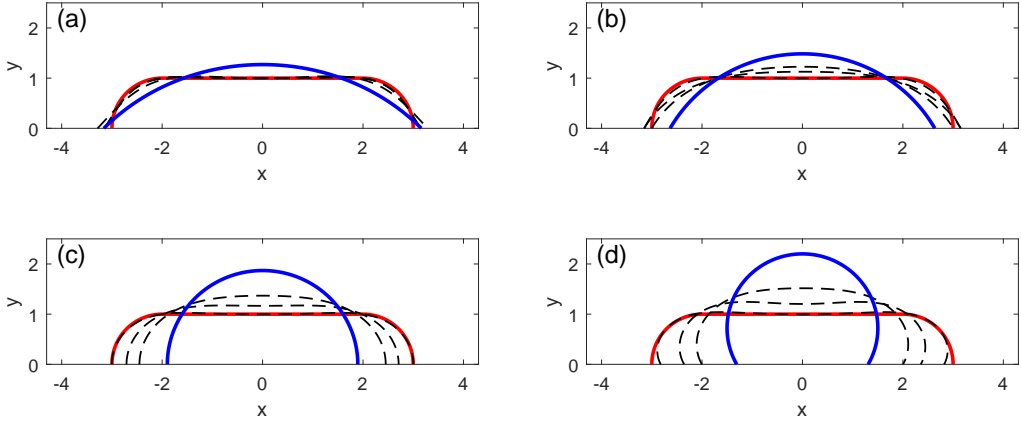}
\caption{Several steps in the evolution of small islands (shown in red line) towards the equilibrium shape (shown in blue) under four different Young's angles: (a) $\theta_i=\pi/4$; (b) $\theta_i=\pi/3$; (c)  $\theta_i=\pi/2$; (d) $\theta_i=2\pi/3$, where the initial shape is chosen as the Shape '2', and  the computational parameters are chosen as: $h=1/140$, $\tau=5\times10^{-3}$.}
\label{fig:figtheta}
\end{figure}

\section{Conclusions}

We proposed an energy-stable parametric finite element method (ES-PFEM) for solving the sharp-interface model about simulating solid-state dewetting of thin films with isotropic surface energies in 2D. The sharp-interface model describes the evolution of an open curve controlled by surface diffusion and contact line (point) migration. By reformulating the relaxed contact angle condition of the sharp-interface model into a Robin-type boundary condition, a new variational formulation was proposed such that the relaxed contact angle condition can be naturally imposed on the variational formulation. Then, we discretized the variational formulation by using piecewise linear elements.
Furthermore, we proved that the solution of the proposed PFEM is well-posed, and satisfies
the energy dissipation.
Compared to the PFEM presented in \citet{Bao17}, we showed that
the proposed ES-PFEM can attain the second-order convergence rate, rather than
the first-order under the manifold distance error for this type of open curve evolution problems
arising in solid-state dewetting. Besides, the newly proposed scheme has very good energy
stability, area/mass conservation, and long-time equal mesh distribution properties. Our future works will consider to extend the ES-PFEM to simulating solid-state dewetting of thin films with anisotropic surface energies as well as 3D problems~\citep[see][]{Jiang19a,Zhao19b}.

\section*{Acknowledgements}

This work was supported by the National Natural Science Foundation of China No.~11871384 (W.J.), Natural Science Foundation of Hubei Province No.~2018CFB466 (W.J.), and the Academic Research Fund of the Ministry of Education of Singapore grant No.~R-146-000-296-112 (Q.Z.\& W.B.). Part of the work was done when the authors were visiting the Institute of Mathematical Science at the National University of Singapore in 2020.

\bibliographystyle{IMANUM-BIB}
\bibliography{thebib}

\begin{thebibliography}{}

\bibitem[Armelao {\em et~al.}(2006)Armelao, Barreca, Bottaro, Gasparotto,
  Gross, Maragno, \& Tondello]{Armelao06}
{\sc Armelao, L., Barreca, D., Bottaro, G., Gasparotto, A., Gross, S., Maragno,
  C. \& Tondello, E.} (2006)
\newblock Recent trends on nanocomposites based on {Cu, Ag and Au} clusters: A
  closer look.
\newblock {\em Coord. Chem. Rev.}, {\bf 250}, 1294--1314.

\bibitem[Backofen {\em et~al.}(2019)Backofen, Wise, Salvalaglio, \&
  Voigt]{Backofen18}
{\sc Backofen, R., Wise, S.~M., Salvalaglio, M. \& Voigt, A.} (2019)
\newblock Convexity splitting in a phase field model for surface diffusion.
\newblock {\em Int. J. Numer. Anal. Mod.}, {\bf 16}, 192--209.

\bibitem[B{\"a}nsch {\em et~al.}(2004)B{\"a}nsch, Morin, \& Nochetto]{Bansch04}
{\sc B{\"a}nsch, E., Morin, P. \& Nochetto, R.~H.} (2004)
\newblock Surface diffusion of graphs: variational formulation, error analysis,
  and simulation.
\newblock {\em SIAM J. Numer. Anal.}, {\bf 42}, 773--799.

\bibitem[B{\"a}nsch {\em et~al.}(2005)B{\"a}nsch, Morin, \& Nochetto]{Bansch05}
{\sc B{\"a}nsch, E., Morin, P. \& Nochetto, R.~H.} (2005)
\newblock A finite element method for surface diffusion: the parametric case.
\newblock {\em J. Comput. Phys.}, {\bf 203}, 321--343.

\bibitem[Bao {\em et~al.}(2017a)Bao, Jiang, Wang, \& Zhao]{Bao17}
{\sc Bao, W., Jiang, W., Wang, Y. \& Zhao, Q.} (2017a)
\newblock A parametric finite element method for solid-state dewetting problems
  with anisotropic surface energies.
\newblock {\em J. Comput. Phys.}, {\bf 330}, 380--400.

\bibitem[Bao {\em et~al.}(2017b)Bao, Jiang, Srolovitz, \& Wang]{Bao17b}
{\sc Bao, W., Jiang, W., Srolovitz, D.~J. \& Wang, Y.} (2017b)
\newblock Stable equilibria of anisotropic particles on substrates: a
  generalized {Winterbottom} construction.
\newblock {\em SIAM J. Appl. Math.}, {\bf 77}, 2093--2118.

\bibitem[Barrett {\em et~al.}(2007a)Barrett, Garcke, \&
  N{\"u}rnberg]{Barrett07b}
{\sc Barrett, J.~W., Garcke, H. \& N{\"u}rnberg, R.} (2007a)
\newblock On the variational approximation of combined second and fourth order
  geometric evolution equations.
\newblock {\em SIAM J. Sci. Comput.}, {\bf 29}, 1006--1041.

\bibitem[Barrett {\em et~al.}(2007b)Barrett, Garcke, \&
  N{\"u}rnberg]{Barrett07}
{\sc Barrett, J.~W., Garcke, H. \& N{\"u}rnberg, R.} (2007b)
\newblock A parametric finite element method for fourth order geometric
  evolution equations.
\newblock {\em J. Comput. Phys.}, {\bf 222}, 441--467.

\bibitem[Barrett {\em et~al.}(2008)Barrett, Garcke, \&
  N{\"u}rnberg]{Barrett08JCP}
{\sc Barrett, J.~W., Garcke, H. \& N{\"u}rnberg, R.} (2008)
\newblock On the parametric finite element approximation of evolving
  hypersurfaces in $\mathbb{R}^3$.
\newblock {\em J. Comput. Phys.}, {\bf 227}, 4281--4307.

\bibitem[Barrett {\em et~al.}(2010)Barrett, Garcke, \& N{\"u}rnberg]{Barrett10}
{\sc Barrett, J.~W., Garcke, H. \& N{\"u}rnberg, R.} (2010)
\newblock Finite-element approximation of coupled surface and grain boundary
  motion with applications to thermal grooving and sintering.
\newblock {\em Eur. J. Appl. Math.}, {\bf 21}, 519--556.

\bibitem[Barrett {\em et~al.}(2011)Barrett, Garcke, \&
  N{\"u}rnberg]{Barrett2011}
{\sc Barrett, J.~W., Garcke, H. \& N{\"u}rnberg, R.} (2011)
\newblock The approximation of planar curve evolutions by stable fully implicit
  finite element schemes that equidistribute.
\newblock {\em Numer. Methods Partial Differ. Equ.}, {\bf 27}, 1--30.

\bibitem[Barrett {\em et~al.}(2019)Barrett, Garcke, \& N{\"u}rnberg]{Barrett19}
{\sc Barrett, J.~W., Garcke, H. \& N{\"u}rnberg, R.} (2019)
\newblock Finite element methods for fourth order axisymmetric geometric
  evolution equations.
\newblock {\em J. Comput. Phys.}, {\bf 376}, 733--766.

\bibitem[Barrett {\em et~al.}(2020)Barrett, Garcke, \& N{\"u}rnberg]{Barrett20}
{\sc Barrett, J.~W., Garcke, H. \& N{\"u}rnberg, R.} (2020)
\newblock Parametric finite element approximations of curvature driven
  interface evolutions.
\newblock {\em Handb. Numer. Anal. (Andrea Bonito and Ricardo H. Nochetto,
  eds.)\/}, {\bf 21}, 275--423.

\bibitem[Bollani {\em et~al.}(2019)Bollani, Salvalaglio, Benali, Bouabdellaoui,
  Naffouti, Lodari, Di~Corato, Fedorov, Voigt, Fraj, {\em et~al.}]{Bollani19}
{\sc Bollani, M., Salvalaglio, M., Benali, A., Bouabdellaoui, M., Naffouti, M.,
  Lodari, M., Di~Corato, S., Fedorov, A., Voigt, A., Fraj, I. {\em et~al.}}
  (2019)
\newblock Templated dewetting of single-crystal sub-millimeter-long nanowires
  and on-chip silicon circuits.
\newblock {\em Nat. Commun.}, {\bf 10}, 5632.

\bibitem[Carter {\em et~al.}(1995)Carter, Roosen, Cahn, \& Taylor]{Carter95}
{\sc Carter, W.~C., Roosen, A.~R., Cahn, J.~W. \& Taylor, J.~E.} (1995)
\newblock Shape evolution by surface diffusion and surface attachment limited
  kinetics on completely faceted surfaces.
\newblock {\em Acta Metall. Mater.}, {\bf 43}, 4309--4323.

\bibitem[Coleman {\em et~al.}(1996)Coleman, Falk, \& Moakher]{Coleman96}
{\sc Coleman, B.~D., Falk, R.~S. \& Moakher, M.} (1996)
\newblock Space--time finite element methods for surface diffusion with
  applications to the theory of the stability of cylinders.
\newblock {\em SIAM J. Sci. Comput.}, {\bf 17}, 1434--1448.

\bibitem[Deckelnick {\em et~al.}(2003)Deckelnick, Dziuk, \&
  Elliott]{Deckelnick03}
{\sc Deckelnick, K., Dziuk, G. \& Elliott, C.~M.} (2003)
\newblock Error analysis of a semidiscrete numerical scheme for diffusion in
  axially symmetric surfaces.
\newblock {\em SIAM J. Numer. Anal.}, {\bf 41}, 2161--2179.

\bibitem[Deckelnick {\em et~al.}(2005)Deckelnick, Dziuk, \&
  Elliott]{Deckelnick05fully}
{\sc Deckelnick, K., Dziuk, G. \& Elliott, C.~M.} (2005)
\newblock Fully discrete finite element approximation for anisotropic surface
  diffusion of graphs.
\newblock {\em SIAM J. Numer. Anal.}, {\bf 43}, 1112--1138.

\bibitem[Dornel {\em et~al.}(2006)Dornel, Barbe, De~Cr{\'e}cy, Lacolle, \&
  Eymery]{Dornel06}
{\sc Dornel, E., Barbe, J., De~Cr{\'e}cy, F., Lacolle, G. \& Eymery, J.} (2006)
\newblock Surface diffusion dewetting of thin solid films: Numerical method and
  application to {Si/SiO$_2$}.
\newblock {\em Phys. Rev. B\/}, {\bf 73}, 115427.

\bibitem[Du {\em et~al.}(2010)Du, Khenner, \& Wong]{Du10}
{\sc Du, P., Khenner, M. \& Wong, H.} (2010)
\newblock A tangent-plane marker-particle method for the computation of
  three-dimensional solid surfaces evolving by surface diffusion on a
  substrate.
\newblock {\em J. Comput. Phys.}, {\bf 229}, 813--827.

\bibitem[Dziwnik {\em et~al.}(2017)Dziwnik, M\"{u}nch, \& Wagner]{Dziwnik15b}
{\sc Dziwnik, M., M\"{u}nch, A. \& Wagner, B.} (2017)
\newblock An anisotropic phase-field model for solid-state dewetting and its
  sharp-interface limit.
\newblock {\em Nonlinearityã\/}, {\bf 30}, 1465--1496.

\bibitem[Figalli {\em et~al.}(2010)Figalli, Maggi, \& Pratelli]{Figalli10}
{\sc Figalli, A., Maggi, F. \& Pratelli, A.} (2010)
\newblock A mass transportation approach to quantitative isoperimetric
  inequalities.
\newblock {\em Invent. Math.}, {\bf 182}, 167--211.

\bibitem[Figalli \& Jerison(2017)Figalli \& Jerison]{Figalli17}
{\sc Figalli, A. \& Jerison, D.} (2017)
\newblock Quantitative stability for the {Brunn-Minkowski} inequality.
\newblock {\em Adv. Math.}, {\bf 314}, 1--47.

\bibitem[Huang {\em et~al.}(2019)Huang, Jiang, \& Yang]{Huang19b}
{\sc Huang, Q.-A., Jiang, W. \& Yang, J.~Z.} (2019)
\newblock An efficient and unconditionally energy stable scheme for simulating
  solid-state dewetting of thin films with isotropic surface energy.
\newblock {\em Commu. Comput. Phys.}, {\bf 26}, 1444--1470.

\bibitem[Jiang {\em et~al.}(2012)Jiang, Bao, Thompson, \& Srolovitz]{Jiang12}
{\sc Jiang, W., Bao, W., Thompson, C.~V. \& Srolovitz, D.~J.} (2012)
\newblock Phase field approach for simulating solid-state dewetting problems.
\newblock {\em Acta Mater.}, {\bf 60}, 5578--5592.

\bibitem[Jiang {\em et~al.}(2016)Jiang, Wang, Zhao, Srolovitz, \& Bao]{Jiang16}
{\sc Jiang, W., Wang, Y., Zhao, Q., Srolovitz, D.~J. \& Bao, W.} (2016)
\newblock Solid-state dewetting and island morphologies in strongly anisotropic
  materials.
\newblock {\em Scr. Mater.}, {\bf 115}, 123--127.

\bibitem[Jiang {\em et~al.}(2018)Jiang, Wang, Srolovitz, \& Bao]{Jiang18a}
{\sc Jiang, W., Wang, Y., Srolovitz, D.~J. \& Bao, W.} (2018)
\newblock Solid-state dewetting on curved substrates.
\newblock {\em Phys. Rev. Mater.}, {\bf 2}, 113401.

\bibitem[Jiang {\em et~al.}(2019a)Jiang, Zhao, Qian, Srolovitz, \&
  Bao]{Jiang19b}
{\sc Jiang, W., Zhao, Q., Qian, T., Srolovitz, D.~J. \& Bao, W.} (2019a)
\newblock Application of {Onsager's} variational principle to the dynamics of a
  solid toroidal island on a substrate.
\newblock {\em Acta Mater.}, {\bf 163}, 154--160.

\bibitem[Jiang {\em et~al.}(2019b)Jiang, Zhao, \& Bao]{Jiang19c}
{\sc Jiang, W., Zhao, Q. \& Bao, W.} (2019b)
\newblock Sharp-interface model for simulating solid-state dewetting in three
  dimensions.
\newblock {\em SIAM J. Appl. Math., to appear (arXiv:1902.05272)\/}.

\bibitem[Jiang \& Zhao(2019)Jiang \& Zhao]{Jiang19a}
{\sc Jiang, W. \& Zhao, Q.} (2019)
\newblock Sharp-interface approach for simulating solid-state dewetting in two
  dimensions: a {Cahn-Hoffman} $\boldsymbol{\xi}$-vector formulation.
\newblock {\em Physica D\/}, {\bf 390}, 69--83.

\bibitem[Jiran \& Thompson(1990)Jiran \& Thompson]{Jiran90}
{\sc Jiran, E. \& Thompson, C.} (1990)
\newblock Capillary instabilities in thin films.
\newblock {\em J. Electron. Mater.}, {\bf 19}, 1153--1160.

\bibitem[Kim {\em et~al.}(2013)Kim, Zucker, Ye, Carter, \& Thompson]{Kim13}
{\sc Kim, G.~H., Zucker, R.~V., Ye, J., Carter, W.~C. \& Thompson, C.~V.}
  (2013)
\newblock Quantitative analysis of anisotropic edge retraction by solid-state
  dewetting of thin single crystal films.
\newblock {\em J. Appl. Phys.}, {\bf 113}, 043512.

\bibitem[Mayer(2001)Mayer]{Mayer01}
{\sc Mayer, U.~F.} (2001)
\newblock Numerical solutions for the surface diffusion flow in three space
  dimensions.
\newblock {\em Comput. Appl. Math.}, {\bf 20}, 361--379.

\bibitem[Naffouti {\em et~al.}(2017)Naffouti, Backofen, Salvalaglio, Bottein,
  Lodari, Voigt, David, Benkouider, Fraj, Favre, {\em et~al.}]{Naffouti17}
{\sc Naffouti, M., Backofen, R., Salvalaglio, M., Bottein, T., Lodari, M.,
  Voigt, A., David, T., Benkouider, A., Fraj, I., Favre, L. {\em et~al.}}
  (2017)
\newblock Complex dewetting scenarios of ultrathin silicon films for
  large-scale nanoarchitectures.
\newblock {\em Sci. Adv.}, {\bf 3}, eaao1472.

\bibitem[Pierre-Louis {\em et~al.}(2009)Pierre-Louis, Chame, \&
  Saito]{Pierre09b}
{\sc Pierre-Louis, O., Chame, A. \& Saito, Y.} (2009)
\newblock Dewetting of ultrathin solid films.
\newblock {\em Phys. Rev. Lett.}, {\bf 103}, 195501.

\bibitem[Rabkin {\em et~al.}(2014)Rabkin, Amram, \& Alster]{Rabkin14}
{\sc Rabkin, E., Amram, D. \& Alster, E.} (2014)
\newblock Solid state dewetting and stress relaxation in a thin single
  crystalline {Ni} film on sapphire.
\newblock {\em Acta Mater.}, {\bf 74}, 30--38.

\bibitem[Randolph {\em et~al.}(2007)Randolph, Fowlkes, Melechko, Klein,
  Meyer~III, Simpson, \& Rack]{Randolph07}
{\sc Randolph, S., Fowlkes, J., Melechko, A., Klein, K., Meyer~III, H.,
  Simpson, M. \& Rack, P.} (2007)
\newblock Controlling thin film structure for the dewetting of catalyst
  nanoparticle arrays for subsequent carbon nanofiber growth.
\newblock {\em Nanotechnology\/}, {\bf 18}, 465304.

\bibitem[Schmidt {\em et~al.}(2009)Schmidt, Wittemann, Senz, \&
  G{\"o}sele]{Schmidt09}
{\sc Schmidt, V., Wittemann, J.~V., Senz, S. \& G{\"o}sele, U.} (2009)
\newblock Silicon nanowires: a review on aspects of their growth and their
  electrical properties.
\newblock {\em Adv. Mater.}, {\bf 21}, 2681--2702.

\bibitem[Thompson(2012)Thompson]{Thompson12}
{\sc Thompson, C.~V.} (2012)
\newblock Solid-state dewetting of thin films.
\newblock {\em Annu. Rev. Mater. Res.}, {\bf 42}, 399--434.

\bibitem[Wang {\em et~al.}(2015)Wang, Jiang, Bao, \& Srolovitz]{Wang15}
{\sc Wang, Y., Jiang, W., Bao, W. \& Srolovitz, D.~J.} (2015)
\newblock Sharp interface model for solid-state dewetting problems with weakly
  anisotropic surface energies.
\newblock {\em Phys. Rev. B\/}, {\bf 91}, 045303.

\bibitem[Wong {\em et~al.}(2000)Wong, Voorhees, Miksis, \& Davis]{Wong00}
{\sc Wong, H., Voorhees, P., Miksis, M. \& Davis, S.} (2000)
\newblock Periodic mass shedding of a retracting solid film step.
\newblock {\em Acta Mater.}, {\bf 48}, 1719--1728.

\bibitem[Young(1805)Young]{Young1805}
{\sc Young, T.} (1805)
\newblock An essay on the cohesion of fluids.
\newblock {\em Philos. Trans. R. Soc. London\/}, {\bf 95}, 65--87.

\bibitem[Zhao(2019)Zhao]{Zhao19}
{\sc Zhao, Q.} (2019)
\newblock A sharp-interface model and its numerical approximation for
  solid-state dewetting with axisymmetric geometry.
\newblock {\em J. Comput. Appl. Math.}, {\bf 361}, 144--156.

\bibitem[Zhao {\em et~al.}(2020)Zhao, Jiang, \& Bao]{Zhao19b}
{\sc Zhao, Q., Jiang, W. \& Bao, W.} (2020)
\newblock A parametric finite element method for solid-state dewetting problems
  in three dimensions.
\newblock {\em SIAM J. Sci. Comput.}, {\bf 42}, B327--B352.

\bibitem[Zucker {\em et~al.}(2013)Zucker, Kim, Carter, \& Thompson]{Zucker13}
{\sc Zucker, R.~V., Kim, G.~H., Carter, W.~C. \& Thompson, C.~V.} (2013)
\newblock A model for solid-state dewetting of a fully-faceted thin film.
\newblock {\em C. R. Physique\/}, {\bf 14}, 564--577.

\end{thebibliography}
\end{document}